\documentclass[leqno, 12pt]{article}
\usepackage{amsmath,amsfonts,amsthm,amssymb,indentfirst,mathrsfs}
\usepackage{xy} \xyoption{all}

\usepackage[colorlinks]{hyperref}
\hypersetup{linkcolor=blue, urlcolor=blue, citecolor=red}

\setlength{\textwidth}{6.5in} \setlength{\textheight}{9in}
\setlength{\evensidemargin}{0in} \setlength{\oddsidemargin}{0in}
\setlength{\topmargin}{-.6in}

\newtheorem{theorem}{Theorem}
\newtheorem{lemma}[theorem]{Lemma}
\newtheorem{corollary}[theorem]{Corollary}
\newtheorem{proposition}[theorem]{Proposition}

\theoremstyle{definition}
\newtheorem{example}[theorem]{Example}

\newtheorem{definition}[theorem]{Definition}

\newcommand{\Aut}{\mathrm{Aut}}
\newcommand{\pth}{\mathrm{Path}}

\newcommand{\Z}{\mathbb{Z}}

\newcommand{\N}{\mathbb{N}}
\newcommand{\so}{\mathbf{s}}
\newcommand{\ra}{\mathbf{r}}
\newcommand{\GL}{\mathscr{L}}
\newcommand{\GR}{\mathscr{R}}
\newcommand{\GJ}{\mathscr{J}}
\newcommand{\GH}{\mathscr{H}}
\newcommand{\GD}{\mathscr{D}}
\newcommand{\GP}{\mathscr{P}}

\begin{document}

\title{The Structure of a Graph Inverse Semigroup}
\author{Zachary Mesyan and J.\ D.\ Mitchell}

\maketitle

\begin{abstract}

Given any directed graph $E$ one can construct a \emph{graph inverse semigroup} $G(E)$, where, roughly speaking, elements correspond to paths in the graph. In this paper we study the semigroup-theoretic structure of $G(E)$. Specifically, we describe the non-Rees congruences on $G(E)$, show that the quotient of $G(E)$ by any Rees congruence is another graph inverse semigroup, and classify the $G(E)$ that have only Rees congruences. We also find the minimum possible degree of a faithful representation by partial transformations of any countable $G(E)$, and we show that a homomorphism of directed graphs can be extended to a homomorphism (that preserves zero) of the corresponding graph inverse semigroups  if and only if it is injective.

\medskip

\noindent
\emph{Keywords:} inverse semigroup, directed graph, congruence

\noindent
\emph{2010 MSC numbers:} 20M18, 20M12, 20M30, 05C20
\end{abstract}

\section{Introduction}

A \emph{graph inverse semigroup} $G(E)$ is a semigroup constructed from a
directed graph $E$ (to be defined precisely below), where, roughly speaking,
elements correspond to paths in the graph. These semigroups were introduced by Ash/Hall~\cite{AH} in order to show that every partial order can be realized as that of the nonzero $\GJ$-classes of an inverse semigroup. Graph inverse semigroups also generalize \emph{polycyclic monoids,} first defined by Nivat/Perrot~\cite{NP}, and arise in the study of rings and $C^*$-algebras. More specifically, for any field $K$ and any directed graph $E$, the (contracted) semigroup ring $KG(E)$ is called the \emph{Cohn path $K$-algebra} of $E$, and the quotient of a Cohn path algebra by a certain ideal is known as the \emph{Leavitt path $K$-algebra} of $E$. These rings were introduced independently by Abrams/Aranda Pino~\cite{AP} and
Ara/Moreno/Pardo~\cite{AMP}. Cohn path algebras and Leavitt path algebras are
algebraic analogues of Toeplitz $C^*$-algebras and graph $C^*$-algebras (see~\cite{KPRR, KPR}), respectively. The connection of graph inverse semigroups to rings is discussed in more detail in~\cite{MMMP}, while their connection to $C^*$-algebras is covered in~\cite{Paterson}. There is extensive literature devoted to all of the algebras mentioned above. Graph inverse semigroups also have been studied in their own right in recent years~\cite{CS, Jones, JL, Krieger, MMMP}. 

The goal of the present paper is to describe the semigroup-theoretic structure
of an arbitrary graph inverse semigroup $G(E)$, with particular emphasis on
the relationship between properties of semigroups and properties of graphs. After recalling some known facts about the ideals of $G(E)$ and describing the partially ordered set of its $\GJ$-classes (Proposition~\ref{partial-orders}), we study in detail the congruences on graph inverse semigroups and their corresponding quotients. Specifically, we show that the quotient of any $G(E)$ by a Rees congruence is always isomorphic to another graph inverse semigroup (Theorem~\ref{Rees-quotients}), describe the non-Rees congruences on these semigroups (Proposition~\ref{Rees-char}), and completely classify those $G(E)$ that have only Rees congruences, in terms of properties of $E$ (Theorem~\ref{only-Rees}). Then we find the minimum possible degree of a faithful representation by partial transformations of an arbitrary countable graph inverse semigroup (Proposition~\ref{repres}). In particular, for finite $G(E)$ this degree is the number of paths in $E$ ending in vertices with out-degree at most $1$. We also show that a homomorphism of directed graphs can be extended to a homomorphism of the corresponding graph inverse semigroups (that preserves zero) if and only if it is injective (Theorem~\ref{graph-iso}). From this we conclude that the automorphism group of any graph $E$ is isomorphic to the automorphism group of the corresponding semigroup $G(E)$ (Corollary~\ref{cor-autos}) and that every group can be realized as the automorphism group of some graph inverse semigroup (Corollary~\ref{group-cor}). The relevant concepts from semigroup theory and graph theory are reviewed in the next section.

Some of the results in this paper were suggested by computations obtained using the \emph{Semigroups} GAP package~\cite{GAP}.

\subsection*{Acknowledgements} 
We would like to thank Benjamin Steinberg for pointing us to relevant literature. We are also grateful to the referee for helpful suggestions about improving the paper and for noticing a gap in the proof of an earlier version of Lemma~\ref{non-Rees}.

\section{Definitions}\label{def-sect}

\subsection{Semigroups} \label{semigp-sect}
We begin by recalling some standard notions from semigroup theory. The readers familiar with the field may wish to skip this subsection, and refer to it as necessary.

Let $S$ be a semigroup. Then $S$ is an \emph{inverse semigroup} if for each $x \in S$ there is a unique element $x^{-1} \in S$ satisfying $x = xx^{-1}x$ and $x^{-1} = x^{-1}xx^{-1}$. By $S^1$ we shall mean the monoid obtained from $S$ by adjoining an identity element (if $S$ does not already have such an element). The following relations on elements $x,y \in S$ are known as \emph{Green's relations:}\\
$(1)$ $x \, \GL \, y$ if and only if $S^1 x = S^1 y$,\\
$(2)$ $x \, \GR \, y$ if and only if $x S^1 = y S^1$,\\
$(3)$ $x \, \GJ \, y$ if and only if $S^1 x S^1 = S^1 y S^1$,\\
$(4)$ $x \, \GH \, y$ if and only if $x \, \GL \, y$ and $x \, \GR \, y$,\\
$(5)$ $x \, \GD \, y$ if and only if $x \, \GL \, z$ and $z \, \GR \, y$ for some $z \in S$.\\
Each of these is an equivalence relation, and we denote by $L_x$, $R_x$, and $J_x$ the $\GL$-class, $\GR$-class, and $\GJ$-class of $x$, respectively. The following define partial orders on these classes:
$(1)$ $L_x\leq_{\GL}L_y$ if and only if $S^1 x \subseteq S^1 y$,\\
$(2)$ $R_x\leq_{\GR}R_y$ if and only if $xS^1 \subseteq yS^1$,\\
$(3)$ $J_x\leq_{\GJ}J_y$ if and only if $S^1xS^1 \subseteq S^1yS^1$.

We denote by $\N$ and $\Z$ the semigroups of the natural numbers and the integers, respectively, under addition.

\subsection{Graphs}\label{graphs-sect}

A \emph{directed graph} $E=(E^0,E^1,\ra,\so)$ consists of two sets $E^0,E^1$ (containing \emph{vertices} and \emph{edges}, respectively), together with functions $\so,\ra:E^1 \to E^0$, called \emph{source} and \emph{range}, respectively. A \emph{path} $x$ in $E$ is a finite sequence of (not necessarily distinct) edges $x=e_1\dots e_n$ such that $\ra(e_i)=\so(e_{i+1})$ for $i=1,\dots,n-1$. In this case, $\so(x):=\so(e_1)$ is the \emph{source} of $x$, $\ra(x):=\ra(e_n)$ is the \emph{range} of $x$, and $|x|:=n$ is the \emph{length} of $x$. If $x = e_1\dots e_n$ is a path in $E$ such that $\so(x)=\ra(x)$ and $\so(e_i)\neq \so(e_j)$ for every $i\neq j$, then $x$ is called a \emph{cycle}. A cycle consisting of one edge is called a \emph{loop}. The graph $E$ is \emph{acyclic} if it has no cycles. We view the elements of $E^0$ as paths of length $0$ (extending $\so$ and $\ra$ to $E^0$ via $\so(v)=v$ and $\ra(v)=v$ for all $v\in E^0$), and denote by $\pth(E)$ the set of all paths in $E$. Given a vertex $v \in E^0$, $|\{e \in E^1 \mid \so(e) = v\}|$ is called the \emph{out-degree} of $v$, while $|\{e \in E^1 \mid \ra(e) = v\}|$ is the \emph{in-degree} of $v$. (If $X$ is any set, then $|X|$ denotes the cardinality of $X$.) A vertex $v \in E^0$ is a \emph{sink} if it has out-degree $0$. A \emph{strongly connected component} of $E$ is a directed subgraph $F$ maximal with respect to the property that for all $v,w \in F^0$ there is some $p \in \pth(F)$ such that $\so(p)=v$ and $\ra(p)=w$.

We say that a directed graph $E$ is \emph{simple} if it has no loops, and for all distinct $v, w \in E^0$ there is at most one $e \in E^1$ such that $\so(e)=v$ and $\ra(e)=w$. A directed graph $E$ is \emph{finite} if $E^0$ and $E^1$ are both finite. From now on we shall refer to directed graphs as simply ``graphs".

Let $E_a=(E_a^0,E_a^1,\ra_a,\so_a)$ and $E_b=(E_b^0,E_b^1,\ra_b,\so_b)$ be two graphs, and let $\phi_0 : E_a^0 \to E_b^0$ and $\phi_1 : E_a^1 \to E_b^1$ be functions. Then the pair $\phi = (\phi_0, \phi_1)$ is a \emph{graph homomorphism from} $E_a$ \emph{to} $E_b$ if $\phi_0(\so_a(e)) = \so_b(\phi_1(e))$ and $\phi_0(\ra_a(e)) = \ra_b(\phi_1(e))$ for every $e \in E_a^1$. If $\phi_0$ and $\phi_1$ are in addition bijective, then $\phi$ is a \emph{graph isomorphism from} $E_a$ \emph{to} $E_b$. In this case we say that $E_a$ and $E_b$ are \emph{isomorphic} and write $E_a \cong E_b$.

\subsection{Graph Inverse Semigroups} \label{gis-sect}

Given a graph $E=(E^0,E^1,\ra,\so)$, the \emph{graph inverse semigroup $G(E)$ of $E$} is the semigroup with zero generated by the sets $E^0$ and $E^1$, together with a set of variables $\{e^{-1} \mid e\in E^1\}$, satisfying the following relations for all $v,w\in E^0$ and $e,f\in E^1$:\\
(V)  $vw = \delta_{v,w}v$,\\ 
(E1) $\so(e)e=e\ra(e)=e$,\\
(E2) $\ra(e)e^{-1}=e^{-1}\so(e)=e^{-1}$,\\
(CK1) $e^{-1}f=\delta _{e,f}\ra(e)$.\\
(Here $\delta$ is the Kronecker delta.) We define $v^{-1}=v$ for each $v \in E^0$, and for any path $y=e_1\dots e_n$ ($e_1,\dots, e_n \in E^1$) we let $y^{-1} = e_n^{-1} \dots e_1^{-1}$. With this notation, every nonzero element of $G(E)$ can be written uniquely as $xy^{-1}$ for some $x, y \in \pth(E)$, by the CK1 relation. It is also easy to verify that $G(E)$ is indeed an inverse semigroup, with $(xy^{-1})^{-1} = yx^{-1}$ for all $x, y \in \pth (E)$.

If $E$ is a graph having only one vertex $v$ and $n$ edges (necessarily loops), for some integer $n \geq 1$, then $G(E)$ is known as a \emph{polycyclic monoid}, and is denoted by $P_n$. We note that $P_1$, also called the \emph{bicyclic monoid}, is typically defined in the literature without a zero element.

\section{Ideals} 

The following characterizations of Green's relations and their associated equivalence classes on graph inverse semigroups will be useful throughout the paper. These characterizations are also given by Jones in~\cite{Jones}, but we include the short proofs for completeness.

\begin{lemma}\label{orders}
Let $E$ be any graph, and let $u,v,x,y \in \pth (E)$ be such that $\, \ra(u)=\ra(v)$ and $\, \ra(x)=\ra(y)$. Then the following hold.
\begin{enumerate}
\item[$(1)$] $L_{uv^{-1}}\leq_{\GL} L_{xy^{-1}}$ if and only if $v=yt$ for some $t \in \pth(E)$.
\item[$(2)$] $R_{uv^{-1}}\leq_{\GR} R_{xy^{-1}}$ if and only if $u=xt$ for some $t \in \pth(E)$.
\item[$(3)$]  $J_{uv^{-1}}\leq_{\GJ} J_{xy^{-1}}$ if and only if $\, \so(t)=\ra(x)$ and $\, \ra(t)=\ra(u)$ for some $t \in \pth(E)$.
\end{enumerate}
\end{lemma}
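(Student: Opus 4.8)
The plan is to reduce everything to a single multiplication formula in $G(E)$ and then read off the three characterizations. First I would establish, for paths $a,b,c,d$ with $\ra(a)=\ra(b)$ and $\ra(c)=\ra(d)$, the normal-form product
\[
ab^{-1}\cdot cd^{-1}=
\begin{cases}
at\,d^{-1} & \text{if } c=bt \text{ for some } t\in\pth(E),\\
a\,(dt)^{-1} & \text{if } b=ct \text{ for some } t\in\pth(E),\\
0 & \text{otherwise.}
\end{cases}
\]
This comes straight from the defining relations: (CK1) forces $b^{-1}c=0$ unless one of $b,c$ is a prefix of the other, in which case (E1), (E2), and (V) collapse $b^{-1}c$ to $t$ or $t^{-1}$; the two nonzero cases agree when $b=c$. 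Since every nonzero element is uniquely $xy^{-1}$, this formula governs all the computations below.

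For part $(1)$, recall that $L_{uv^{-1}}\leq_{\GL}L_{xy^{-1}}$ means $uv^{-1}\in S^1xy^{-1}$. For the ``if'' direction, given $v=yt$ I would exhibit $s=u(xt)^{-1}$, which lies in $G(E)$ because $\ra(u)=\ra(v)=\ra(t)=\ra(xt)$ and $\so(t)=\ra(y)=\ra(x)$, and then check via the formula (the case $b=ct$) that $s\cdot xy^{-1}=u(yt)^{-1}=uv^{-1}$. For the ``only if'' direction, I would write $uv^{-1}=s\cdot xy^{-1}$ with $s\in S^1$; the case $s=1$ gives $v=y=y\,\ra(y)$, and otherwise writing $s=ab^{-1}$ and applying the formula yields either $v=y$ or $v=yt$ directly. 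Either way $v=yt$ for some $t\in\pth(E)$.

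Part $(2)$ I would not prove from scratch but deduce from part $(1)$ by inversion: since $x\mapsto x^{-1}$ is an anti-automorphism of $S$ (extending to $S^1$), the inclusion $uv^{-1}S^1\subseteq xy^{-1}S^1$ is equivalent to $S^1vu^{-1}\subseteq S^1yx^{-1}$, i.e. $L_{vu^{-1}}\leq_{\GL}L_{yx^{-1}}$, which by part $(1)$ holds iff $u=xt$ for some $t$.

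Part $(3)$ is where the real work lies, and the ``only if'' direction is the main obstacle. The condition $J_{uv^{-1}}\leq_{\GJ}J_{xy^{-1}}$ says $uv^{-1}\in S^1xy^{-1}S^1$, and the asymmetric-looking conclusion is really a reachability statement: there is a path $t$ from $\ra(x)$ to $\ra(u)$. For the ``if'' direction, given such $t$ I would left-multiply $xy^{-1}$ by $u(xt)^{-1}$ to reach $u(yt)^{-1}$, then right-multiply by $(yt)v^{-1}$ to land on $uv^{-1}$, checking the validity of each factor from the equalities $\so(t)=\ra(x)=\ra(y)$ and $\ra(t)=\ra(u)=\ra(v)$. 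For the ``only if'' direction, I would track the range of the numerator through an arbitrary product $s_1xy^{-1}s_2$: writing $w=\ra(x)$, the formula shows that right multiplication keeps the numerator's range reachable from $w$ (the only nontrivial case produces a numerator $xt'$ with $\so(t')=\ra(y)=w$), and that left multiplication then preserves reachability from $w$ as well (in the case $b=pt''$ one uses $\so(t'')=\ra(p)$, already reachable from $w$). Hence the numerator $u$ of $uv^{-1}=s_1xy^{-1}s_2$ has $\ra(u)$ reachable from $w=\ra(x)$, giving the desired $t$. The delicate points throughout are the bookkeeping of the empty-path and $S^1$-identity cases, and verifying at each multiplication that the relevant ranges match so that the intermediate elements are nonzero.
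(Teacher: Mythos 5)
Your proof is correct. For parts (1) and (2) it is essentially the paper's argument --- both come down to normal-form computations --- with only cosmetic differences: the paper first simplifies $G(E)^1 uv^{-1} \subseteq G(E)^1 xy^{-1}$ to $G(E)v^{-1}\subseteq G(E)y^{-1}$ and then proves (2) ``analogously,'' whereas you apply your multiplication formula directly and get (2) from (1) via the inversion anti-automorphism. The genuine divergence is in the converse of part (3). The paper's key move is a reduction to vertices: it first proves the ideal identity $G(E)xy^{-1}G(E)=G(E)\ra(x)G(E)$, then observes that $\ra(u)=u^{-1}(uv^{-1})v$ lies in this ideal, so the whole question collapses to a single normal-form analysis --- when can $st^{-1}rp^{-1}$, with $\so(t)=\ra(x)=\so(r)$, $\ra(t)=\ra(s)$, $\ra(r)=\ra(p)$, equal the vertex $\ra(u)$? (Only if $s,p\in E^0$ and $r=t$, which hands over the required path.) You instead run an invariant argument, tracking reachability of the numerator's range from $\ra(x)$ through the two multiplications in $s_1xy^{-1}s_2$; I checked the case analysis and it goes through, including the concatenation step $qt''$ in the case $b=pt''$. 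Both routes are sound: the paper's vertex-reduction is more economical, needing one normal-form analysis instead of a two-stage case analysis, while yours makes all three parts mechanical consequences of a single multiplication formula and makes it transparent from the start why the answer in (3) is a pure reachability condition on the graph. One small bookkeeping item for a write-up: you should also dispose of the cases $s=0$ and $s_i=0$, which are immediate since the resulting product would be $0\neq uv^{-1}$.
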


\begin{proof}
(1) $L_{uv^{-1}}\leq_{\GL} L_{xy^{-1}}$ if and only if $G(E)^1 uv^{-1} \subseteq G(E)^1 xy^{-1}$ if and only if $G(E) v^{-1} \subseteq G(E) y^{-1}$, since $u^{-1}, x^{-1}, \ra(v), \ra(y) \in G(E)$. The latter is equivalent to $v^{-1} \in G(E) y^{-1}$, which is in turn equivalent to $v^{-1} = t^{-1}y^{-1}$ for some $t \in \pth(E)$, that is $v=yt$.

\vspace{\baselineskip}

(2) Analogously to the proof of (1), $R_{uv^{-1}}\leq_{\GR} R_{xy^{-1}}$ if and only if $uv^{-1} G(E)^1 \subseteq xy^{-1} G(E)^1$ if and only if $u \in x G(E)$ if and only if $u=xt$ for some $t \in \pth(E)$.

\vspace{\baselineskip}

(3) Since $xy^{-1}=x\ra(x)y^{-1}$ and $\ra(x) = \ra(x)\ra(y) = x^{-1}(xy^{-1})y$, we have $G(E) xy^{-1} G(E) = G(E) \ra(x) G(E)$.

Now suppose that there is $t \in \pth(E)$ such that $\so(t)=\ra(x)$ and $\ra(t)=\ra(u)$. Then $$\ra(u) = \ra(t) = t^{-1}t = t^{-1}\so(t)t \in G(E)\ra(x)G(E).$$ Hence $uv^{-1} = u\ra(u)v^{-1} \in G(E)\ra(x)G(E)$, and since $G(E)\ra(x)G(E) = G(E) xy^{-1} G(E)$, we have $uv^{-1} \in G(E) xy^{-1} G(E)$. It follows that $J_{uv^{-1}}\leq_{\GJ} J_{xy^{-1}}$.

Conversely, if $J_{uv^{-1}}\leq_{\GJ} J_{xy^{-1}}$, then $uv^{-1} \in G(E) xy^{-1} G(E)$, and therefore $$\ra(u) = u^{-1}uv^{-1}v \in G(E) xy^{-1} G(E) = G(E) \ra(x) G(E).$$ Hence $\ra(u) = st^{-1}rp^{-1}$ for some $r,p,s,t \in \pth (E)$ with $\so(t) = \ra(x) = \so(r)$, $\ra(t)= \ra(s)$, and $\ra(r)= \ra(p)$. By the uniqueness of the representations of elements of $G(E)$ discussed in Section~\ref{gis-sect}, for $st^{-1}rp^{-1}$ to be a vertex we must have $s,p \in E^0$. Hence $s=\ra(u)=p$, and therefore $\ra(u) = t^{-1}r$. It follows that $r=t$, and in particular, $\so(t)=\ra(x)$ and $\ra(t)= \ra(s) = \ra(u)$, as required.
\end{proof}

\begin{corollary} \label{Green's}
Let $E$ be any graph, and let $u,v,x,y \in \pth (E)$ be such that $\, \ra(u)=\ra(v)$ and $\, \ra(x)=\ra(y)$. Then the following hold.
\begin{enumerate}
\item[$(1)$] $uv^{-1} \, \GL \, xy^{-1}$ if and only if $v=y$. 
\item[$(2)$] $uv^{-1} \, \GR \, xy^{-1}$ if and only if $u=x$. 
\item[$(3)$] $uv^{-1} \, \GJ \, xy^{-1}$ if and only if $\, \ra(u)$ and $\, \ra(x)$ are in the same strongly connected component of $E$.
\item[$(4)$] $uv^{-1} \, \GH \, xy^{-1}$ if and only if $uv^{-1}=xy^{-1}$.
\item[$(5)$] $uv^{-1} \, \GD \, xy^{-1}$ if and only if $\, \ra(u)=\ra(x)$.
\end{enumerate}
\end{corollary}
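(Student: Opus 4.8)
The plan is to derive all five statements from Lemma~\ref{orders}, exploiting that $\GL$, $\GR$, and $\GJ$ are each the symmetrization of the corresponding partial order, that $\GH = \GL \cap \GR$ by definition, and that $uv^{-1} \GD xy^{-1}$ means $uv^{-1} \GL z$ and $z \GR xy^{-1}$ for some $z \in G(E)$.

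For (1) and (2) I would argue as follows. The relation $uv^{-1} \GL xy^{-1}$ holds exactly when both $L_{uv^{-1}} \leq_{\GL} L_{xy^{-1}}$ and $L_{xy^{-1}} \leq_{\GL} L_{uv^{-1}}$, so by Lemma~\ref{orders}(1) there are paths $s,t$ with $v = yt$ and $y = vs$. Substituting gives $v = vst$, and comparing lengths forces $|s| = |t| = 0$; hence $t = \ra(y)$ is a vertex and $v = y\ra(y) = y$ by (E1). The converse is immediate, taking $t$ to be the appropriate vertex. Part (2) is the exact mirror image using Lemma~\ref{orders}(2).

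Parts (4) and (5) then follow quickly. Since $\GH = \GL \cap \GR$, combining (1) and (2) shows $uv^{-1} \GH xy^{-1}$ iff $v = y$ and $u = x$, which by the uniqueness of the normal form $xy^{-1}$ is exactly $uv^{-1} = xy^{-1}$. For (5), a $\GD$-witness $z$ must satisfy $uv^{-1} \GL z$ and $z \GR xy^{-1}$; writing $z$ in normal form and applying (1) and (2) forces $z = xv^{-1}$. This is a genuine element of $G(E)$ precisely when $\ra(x) = \ra(v)$, and since $\ra(v) = \ra(u)$ this is the condition $\ra(u) = \ra(x)$; conversely, when $\ra(u) = \ra(x)$ the element $z = xv^{-1}$ witnesses $uv^{-1} \GD xy^{-1}$.

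Finally, for (3), the relation $uv^{-1} \GJ xy^{-1}$ holds iff the $\leq_{\GJ}$ inequality holds both ways, so by Lemma~\ref{orders}(3) there are paths realizing $\so(t) = \ra(x)$, $\ra(t) = \ra(u)$ and $\so(s) = \ra(u)$, $\ra(s) = \ra(x)$; that is, $\ra(u)$ and $\ra(x)$ are reachable from one another by paths in $E$. The step I expect to require the most care is identifying this mutual reachability with membership in a common strongly connected component: mutual reachability is an equivalence relation on $E^0$, and one must check that every vertex lying on a path between two mutually reachable vertices is itself mutually reachable with them, so that each equivalence class spans a maximal strongly connected subgraph. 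Granting this, $\ra(u)$ and $\ra(x)$ are mutually reachable iff they lie in the same strongly connected component, which completes (3).
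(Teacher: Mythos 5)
Your proposal is correct and takes essentially the same approach as the paper's proof: both derive (1)--(3) by symmetrizing the partial orders of Lemma~\ref{orders}, obtain (4) from (1) and (2), and prove (5) by pinning down the normal form $xv^{-1}$ of a $\GD$-witness. The only differences are ones of detail (you spell out the length argument in (1) and the mutual-reachability characterization of strongly connected components in (3), both of which the paper leaves implicit).
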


\begin{proof}
To prove (1) we note that $uv^{-1} \, \GL \, xy^{-1}$ if and only if $L_{uv^{-1}} = L_{xy^{-1}}$, which is equivalent to $v=y$, by Lemma~\ref{orders}(1). The proofs of (2) and (3) are analogous, while (4) follows from (1) and (2). For (5), we have $uv^{-1} \, \GD \, xy^{-1}$ if and only if $uv^{-1} \, \GL \, rp^{-1}$ and $rp^{-1} \, \GR \, xy^{-1}$ for some $r,p \in \pth(E)$ such that $\ra(r)=\ra(p)$. By (1) and (2), this is equivalent to $v=p$ and $r=x$ for some $r,p \in \pth(E)$ such that $\ra(r)=\ra(p)$, which is equivalent to $\ra(x)= \ra(v)=\ra(u)$.
\end{proof}

It follows from Corollary~\ref{Green's}(3) that there is a one-to-one correspondence between the strongly connected components of the graph $E$ and the nonzero $\mathscr{J}$-classes of $G(E)$. In particular, if $E$ acyclic, then the nonzero $\mathscr{J}$-classes are in correspondence with the vertices of $E$. 

In the next proposition we describe the structure of the partial order of nonzero $\GJ$-classes of a graph inverse semigroup. First, we note that if $E$ is a simple graph, then every edge is uniquely determined by its source and range vertices, and hence $E^1$ can be identified with the subset $\{(\so(e),\ra(e)) \mid e \in E^1\}$ of $E^0\times E^0$.

\begin{proposition} \label{partial-orders}
Let $E$ be a graph, and let $C(E)$ be the set of strongly connected components of $E$. Also let $$B(E) = \{(U,V) \mid U\neq V \text{ and } \, \so(e)\in V^0, \ra(e)\in U^0 \text{ for some } e\in E^1 \} \subseteq C(E)\times C(E),$$ and let $E_S$ be the simple graph defined by $E_S^0 = C(E)$ and $E_S^1 = B(E)$.

Then the following partially ordered sets are order-isomorphic:
\begin{enumerate}
\item[$(a)$] the set of nonzero $\GJ$-classes of $G(E)$ with the partial order $\leq_{\GJ}$,
\item[$(b)$] the set of nonzero $\GJ$-classes of $G(E_S)$ with the partial order $\leq_{\GJ}$,
\item[$(c)$] $C(E)$ with the least transitive reflexive binary relation containing $B(E)$.
\end{enumerate}
\end{proposition}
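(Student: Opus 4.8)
The plan is to fix one canonical bijection between the three underlying sets and then check that it respects the orders. By Corollary~\ref{Green's}(3) the nonzero $\GJ$-classes of $G(E)$ are exactly the sets of elements $uv^{-1}$ whose common range $\ra(u)=\ra(v)$ lies in a fixed strongly connected component; this gives a bijection between the set in (a) and $C(E)$, sending a component $U$ to the class $J_U$ of any $uv^{-1}$ with $\ra(u)\in U^0$. So the real content is that, under this bijection, $\leq_\GJ$ becomes the relation $\preceq$ of (c). I would prove (a) $\cong$ (c) first and then obtain (b) by applying (a) to the graph $E_S$.

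For (a) $\cong$ (c) I would start from Lemma~\ref{orders}(3): $J_U \leq_\GJ J_X$ holds iff there is a path $t$ in $E$ with $\so(t)=\ra(x)\in X^0$ and $\ra(t)=\ra(u)\in U^0$; since each component is strongly connected, this is independent of the chosen representatives and says exactly that some vertex of $U$ is reachable from some vertex of $X$. The key step is then a reachability-versus-closure lemma: for components $U\neq X$, a vertex of $U$ is reachable from a vertex of $X$ in $E$ if and only if $(U,X)$ lies in the reflexive–transitive closure $\preceq$ of $B(E)$. For the forward direction I would take a path realizing the reachability and record the sequence of distinct components it passes through; each crossing from one component into a different one contributes an edge of $E$ between distinct components, hence a generator of $B(E)$, and composing these yields $(U,X)\in\preceq$. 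Here one uses that the condensation of $E$ is acyclic, so the path never re-enters a component it has left and its list of crossings really is a $B(E)$-chain. The converse is immediate: a $B(E)$-chain is a list of inter-component edges that concatenate, through intra-component paths available by strong connectedness, into an actual path. Antisymmetry of $\preceq$ — needed for (c) to be a poset at all — follows because $U\preceq X\preceq U$ with $U\neq X$ would force $U$ and $X$ into a common strongly connected component.

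For (b) I would apply everything already proved to $E_S$ in place of $E$. First, $E_S$ is simple by construction and it is acyclic: a cycle in $E_S$ would be a chain of inter-component edges of $E$ returning to its start, forcing its vertices into a single strongly connected component of $E$ and contradicting their distinctness. Hence every strongly connected component of $E_S$ is a single vertex, $C(E_S)$ is naturally identified with $C(E)=E_S^0$, and by (a) applied to $E_S$ the poset in (b) is order-isomorphic to $C(E_S)$ equipped with the reflexive–transitive closure of $B(E_S)$. The remaining step is to identify this closure with $\preceq$: since $E_S$ records precisely the adjacencies between distinct components coming from edges of $E$, the relation $B(E_S)$ recovers $B(E)$, and passing to closures returns the relation of (c).

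The step I expect to be the main obstacle is the orientation bookkeeping in the last paragraph — matching the direction in which $B(E)$ records an inter-component edge, the induced source and range on $E_S$, and the direction of $\leq_\GJ$ supplied by Lemma~\ref{orders}(3) — so that the closure of $B(E_S)$ comes out equal to $\preceq$ rather than to the opposite relation. The reachability-versus-closure lemma of the second paragraph, and in particular the use of acyclicity of the condensation to guarantee that a path's component-crossings form an honest $B(E)$-chain, is the one genuinely non-formal ingredient; everything else is translation through Lemma~\ref{orders} and Corollary~\ref{Green's}.
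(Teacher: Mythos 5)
Your proposal is correct and takes essentially the same route as the paper: both arguments rest on identifying the reflexive--transitive closure of $B(E)$ with path-reachability between strongly connected components (which the paper asserts in one line and you prove via component-crossings), on Corollary~\ref{Green's}(3) for the bijections with $C(E)$, and on Lemma~\ref{orders}(3) for order-compatibility, with antisymmetry handled identically. The only cosmetic difference is that the paper defines the map from (b) to (c) directly rather than applying (a) to $E_S$ as you do, and your flagged orientation worry does resolve: the pair $(U,V)\in B(E)=E_S^1$ is the edge of $E_S$ with source $V$ and range $U$ (this reading is forced by the definition of $B(E)$), so $B(E_S)$ coincides with $B(E)$ under the identification $C(E_S)=E_S^0=C(E)$.
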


\begin{proof}
First, note that $(U,V) \in C(E)\times C(E)$ belongs to the relation given in (c) if and only if there is a path $p \in \pth(E)$ with $\so(p) \in V^0$ and $\ra(p) \in U^0$ (which includes the case where $U=V$). It follows from this that if $(U,V)$ and $(V,U)$ belong to this relation, then $U=V$, and hence that the relation is necessarily antisymmetric, making it a partial order.

By Corollary~\ref{Green's}(3), every nonzero $\GJ$-class of $G(E)$ contains a vertex, and two vertices in $E^0$ belong to the same $\GJ$-class if and only if they are in the same strongly connected component of $E$. Thus, the map $\varphi_1$ from the set defined in $(a)$ to the set defined in $(c)$, that takes each $\GJ$-class to the strongly connected component containing the vertices in that $\GJ$-class, is well-defined and bijective. Analogously, the map $\varphi_2$ from the set defined in $(b)$ to the set defined in $(c)$, that takes each $\GJ$-class to the strongly connected component containing the unique vertex in that $\GJ$-class, is well-defined and bijective.

Now, by Lemma~\ref{orders}(3), $J_u \leq_{\GJ} J_v$ if and only if there is a path in $E$ from $v$ to $u$, for all $u,v \in E^0$, and similarly for $E_S$. Hence, it follows from the definition of $B(E)$ that $\varphi_1$ and $\varphi_2$ respect the partial orders on their domains, and are therefore order-isomorphisms.
\end{proof}



As a consequence of Proposition~\ref{partial-orders} we obtain the following result of Ash and Hall.

\begin{corollary}[Theorem 4(i) in \cite{AH}] \label{poset-thrm}
Every partially ordered set is order-isomorphic to the set of nonzero $\GJ$-classes of $G(E)$ with the partial order $\, \leq_{\GJ}$, for some graph $E$.
\end{corollary}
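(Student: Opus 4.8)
The plan is to realize an arbitrary poset $(P,\leq)$ directly as the poset described in part $(c)$ of Proposition~\ref{partial-orders}, and then invoke the order-isomorphism between $(a)$ and $(c)$ to transfer this to the nonzero $\GJ$-classes of the resulting semigroup. Given $(P,\leq)$, I would construct a graph $E$ by setting $E^0 = P$ and placing a single edge $e_{q,p}$ with $\so(e_{q,p}) = q$ and $\ra(e_{q,p}) = p$ for each ordered pair $(p,q)$ with $p < q$ (strict inequality). The orientation is chosen so that a strict relation $p < q$ produces an edge pointing from the larger element down to the smaller one; this is exactly what makes the reachability relation in $E$ line up with $\leq$ under the definitions of $B(E)$ and of the order in $(c)$.

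First I would check that $E$ is acyclic: any cycle would be a path $p_0 \to p_1 \to \cdots \to p_0$ along edges of the above form, forcing a strictly decreasing chain $p_0 > p_1 > \cdots > p_0$ and hence $p_0 < p_0$, which is impossible. Consequently every strongly connected component of $E$ is a single vertex (with no edges), so the assignment $v \mapsto \{v\}$ identifies $C(E)$ with $P$ as a set.

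Next I would compute the least reflexive transitive relation containing $B(E)$ on $C(E)$. By construction $(\{p\},\{q\}) \in B(E)$ precisely when there is an edge with source $q$ and range $p$, that is, when $p < q$; adjoining the diagonal and closing under transitivity yields exactly $\{(\{p\},\{q\}) : p \leq q\}$. Here the inclusion ``$\subseteq$'' uses transitivity of $\leq$ to see that composing edges never escapes the relation $\leq$, while ``$\supseteq$'' is immediate since each strict relation $p<q$ already sits in $B(E)$ and each pair $(\{p\},\{p\})$ is supplied by reflexivity. Under the identification $C(E) = P$ this relation is precisely $\leq$, so the identity map is an order-isomorphism from $(P,\leq)$ onto the poset of part $(c)$.

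Finally, Proposition~\ref{partial-orders} supplies an order-isomorphism between the poset in $(c)$ and the set of nonzero $\GJ$-classes of $G(E)$ under $\leq_{\GJ}$; composing it with the isomorphism above gives $(P,\leq) \cong (\text{nonzero } \GJ\text{-classes of } G(E), \leq_{\GJ})$, as required. I do not anticipate a serious obstacle, since the argument is essentially bookkeeping layered on top of Proposition~\ref{partial-orders}. The only points needing care are fixing the orientation of the edges so that the order comes out correctly rather than dualized, and using \emph{all} strict pairs $p<q$ rather than merely the covering relations, so that the argument remains valid for infinite posets whose order is not the transitive closure of its covers.
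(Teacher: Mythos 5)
Your proposal is correct and follows essentially the same route as the paper: the paper's proof is the one-line observation that any poset arises as the transitive reflexive closure of a relation of the form $B(E)$, combined with Proposition~\ref{partial-orders}, and your construction (vertices $=$ poset elements, one edge from $q$ to $p$ for each strict pair $p<q$, acyclicity forcing singleton strongly connected components) is exactly the verification of that observation, carried out in full detail and with the correct edge orientation.
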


\begin{proof}
This follows from Proposition~\ref{partial-orders}, since any partially ordered set can be obtained by taking the transitive reflexive closure of a binary relation of the form $B(E)$ in the proposition.
\end{proof}

In contrast to Corollary~\ref{poset-thrm}, the possible partial order structures on the sets of $\GL$-classes and $\GR$-classes of $G(E)$ (with the partial orders $\leq_{\GL}$ and $\leq_{\GR}$, respectively) are rather limited, as the next lemma (which follows immediately from Lemma~\ref{orders}(1,2)) shows.

\begin{lemma}\label{LR-partial}
Let $E$ be a graph and $v \in E^0$. Then $L_v$ is a maximal element with respect to $\leq_{\GL}$, and $R_v$ is a maximal element with respect to $\leq_{\GR}$.
\end{lemma}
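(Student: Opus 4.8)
The plan is to read off both claims directly from Lemma~\ref{orders}(1) and~(2), using the observation that a vertex $v \in E^0$, viewed as an element of $G(E)$, has the canonical representation $v = vv^{-1}$ in which both constituent paths are the length-$0$ path $v$. Placing $v$ into the normal form $uw^{-1}$ of Lemma~\ref{orders} thus amounts to taking $u = w = v$, and the hypothesis $\ra(u) = \ra(w)$ is satisfied trivially since both equal $v$.

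First I would handle the $\GL$-statement. To show $L_v$ is maximal with respect to $\leq_{\GL}$, I would suppose $L_v \leq_{\GL} L_{xy^{-1}}$ for some nonzero $xy^{-1} \in G(E)$ with $\ra(x) = \ra(y)$, and show this forces $L_{xy^{-1}} = L_v$. Applying Lemma~\ref{orders}(1) to $v = vv^{-1}$, the assumed inequality is equivalent to $v = yt$ for some $t \in \pth(E)$. Since $|v| = 0$, the only factorization $v = yt$ with $y,t$ paths has $|y| = |t| = 0$, forcing $y = v = t$. Having pinned down $y = v$, I would then run the equivalence backwards: by Lemma~\ref{orders}(1) with the two elements interchanged, $L_{xy^{-1}} \leq_{\GL} L_v$ holds precisely because $y = v = v\cdot v$ exhibits $y$ as $vs$ with $s = v \in \pth(E)$. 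Hence $L_{xy^{-1}} = L_v$, so nothing lies strictly above $L_v$ and $L_v$ is maximal.

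The $\GR$-statement is entirely dual. Starting from $R_v \leq_{\GR} R_{xy^{-1}}$, Lemma~\ref{orders}(2) gives $v = xt$ for some $t \in \pth(E)$; the same length count forces $x = v$, and the relation run in reverse yields $R_{xy^{-1}} \leq_{\GR} R_v$, hence $R_{xy^{-1}} = R_v$.

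I do not expect any genuine obstacle here: the whole content is the elementary length argument showing that the length-$0$ path $v$ admits no factorization $v = yt$ except the trivial one, so that $v$ can be a proper right-factor of nothing. The only point meriting a moment's care is the bookkeeping of which of the two stored paths of each element appears in the factorization criteria of Lemma~\ref{orders}(1,2), together with checking the range hypotheses, all of which are immediate once $v$ is written as $vv^{-1}$.
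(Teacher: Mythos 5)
Your proof is correct and takes essentially the same route as the paper: the paper gives no separate argument, stating only that the lemma follows immediately from Lemma~\ref{orders}(1,2), and your write-up simply fills in that immediate derivation (writing $v = vv^{-1}$, using the length-$0$ factorization argument, and running the criterion in both directions to get equality of classes).
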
 

For example, it follows from the above lemma that up to order-isomorphism the only totally ordered set with more than one element that can be realized as the nonzero $\GR$-classes of $G(E)$ with the partial order $\leq_{\GR}$ is the set of the negative integers (with the usual ordering). For, by Lemma~\ref{LR-partial}, the nonzero $\GR$-classes of $G(E)$ are totally ordered only if $|E^0|=1$. Moreover, there can be at most one edge in $E^1$ (if $e,f \in E^1$ were distinct, then $R_e$ and $R_f$ would be incomparable, by Lemma~\ref{orders}(2)). Thus, either $E^1$ is empty, in which case $G(E)$ has exactly one nonzero $\GR$-class, or $E^1\ = \{e\}$, in which case the nonzero $\GR$-classes are related as follows: $$\dots \leq_{\GR} R_{e^3} \leq_{\GR} R_{e^2} \leq_{\GR} R_{e} \leq_{\GR} R_v.$$ By a similar argument, the same holds for the $\GL$-classes of $G(E)$.

\section{Congruences}

Recall that given a semigroup $S$, an equivalence relation $R \subseteq S \times S$ is a \emph{congruence} if  $(x, y) \in R$ implies that $(xz, yz), (zx, zy) \in R$ for all $x, y, z \in S$. The \emph{diagonal congruence} on $S$ is the relation $\Delta =  \{(x,x) \mid x \in S\}$. A congruence $R \subseteq S \times S$ is a \emph{Rees congruence} if $R = (I \times I) \cup \Delta$ for some ideal $I$ of $S$. Note that if $S$ has a zero element, then $\Delta$ is the Rees congruence corresponding to the zero ideal. Also, $S$ is \emph{congruence-free} if its only congruences are $S \times S$ and $\Delta$. 

We begin our investigation of the congruences on graph inverse semigroups by describing the quotients of these semigroups by Rees congruences.

\begin{definition}
Let $E$ be a graph and $S \subseteq E^0$. By $E\setminus S$ we shall denote the graph $F = (F^0,F^1,\ra_F,\so_F)$, where $F^0 = E^0 \setminus S$, $F^1 = E^1 \setminus \{e \in E^1 \mid \so(e) \in S \text{ or } \ra(e) \in S\}$, and $\ra_F$, $\so_F$ are the restrictions of $\ra$, $\so$, respectively, to $F^1$.
\end{definition}

\begin{theorem} \label{Rees-quotients}
Let $E$ be a graph, and let $R \subseteq G(E) \times G(E)$ be a Rees congruence. Then $G(E)/R \cong G(E\setminus (I \cap E^0))$, where $I$ is the ideal of $G(E)$ corresponding to $R$.
\end{theorem}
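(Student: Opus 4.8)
The plan is to let $S = I \cap E^0$ and first pin down exactly which elements of $G(E)$ lie in $I$. Since $I$ is a two-sided ideal and we have $\ra(x) = x^{-1}(xy^{-1})y$ as well as $xy^{-1} = x\,\ra(x)\,y^{-1}$, a nonzero element $xy^{-1}$ (with $\ra(x)=\ra(y)$) lies in $I$ if and only if $\ra(x) \in S$; thus $I$ is completely determined by the vertex set $S$. I would also record that $S$ is closed under reachability: if $v \in S$ and $p \in \pth(E)$ has $\so(p) = v$, then $p = vp \in G(E)vG(E) \subseteq I$, so $\ra(p) = p^{-1}p \in I \cap E^0 = S$. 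Consequently a path $x$ with $\ra(x) \notin S$ can meet no vertex of $S$ at all (an intermediate vertex lying in $S$ would reach $\ra(x)$, forcing $\ra(x)\in S$), so $x$ is precisely a path of $E \setminus S$.

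Next I would build the bijection. The paths of $E \setminus S$ are exactly the paths of $E$ all of whose vertices avoid $S$, so the nonzero elements of $G(E \setminus S)$ are exactly the $xy^{-1}$ with $x, y \in \pth(E)$ and $\ra(x) = \ra(y) \notin S$, which by the previous paragraph are precisely the nonzero elements of $G(E)$ lying outside $I$. Since $R$ collapses $I$ to a single class and is the identity elsewhere, I can define $\theta : G(E \setminus S) \to G(E)/R$ by sending $0$ to the class of $I$ and each nonzero $xy^{-1}$ to its $R$-class $[xy^{-1}]$; by the above this is a well-defined bijection.

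It then remains to check that $\theta$ is a homomorphism, which is where the actual work lies. Using the multiplication in a graph inverse semigroup, $(uv^{-1})(xy^{-1}) = u(v^{-1}x)y^{-1}$ equals $uty^{-1}$ when $x = vt$, equals $u(yt)^{-1}$ when $v = xt$, and equals $0$ otherwise, I would observe that the prefix relation deciding the case, together with the tail $t$, are sub-paths of $x$ or of $v$ and hence already live in $E \setminus S$; so the product computed in $G(E\setminus S)$ agrees with the product computed in $G(E)$. The one point to confirm is that this product never falls into $I$ when both factors avoid $I$: in the first case its range is $\ra(t)=\ra(x) \notin S$ and in the second it is $\ra(u) \notin S$, so the product is nonzero in the quotient and $\theta$ matches it; products involving $0$ are absorbed on both sides.

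I expect the main obstacle to be bookkeeping the multiplication cases and, in particular, making sure the reachability-closure of $S$ is invoked at each step so that all the relevant paths and tails are genuinely paths of $E \setminus S$. The conceptual content is entirely contained in the two facts that $I$ is determined by $S = I \cap E^0$ and that having range outside $S$ forces a path to avoid $S$ everywhere.
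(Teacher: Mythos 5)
Your proposal is correct and takes essentially the same approach as the paper, just run in the opposite direction: the paper defines the surjective homomorphism $\varphi : G(E) \to G(E \setminus S)$ (where $S = I \cap E^0$), acting as the identity on elements whose paths avoid $S$ and as zero elsewhere, and then applies the first isomorphism theorem, whereas you construct the inverse bijection $G(E \setminus S) \to G(E)/R$ directly. The mathematical substance is identical in both: a nonzero $xy^{-1}$ lies in $I$ exactly when $\ra(x) \in S$ (the paper derives this from its Corollary on $\GJ$-classes, you from the ideal property plus reachability-closure of $S$), so $G(E)\setminus I$ is identified with the nonzero elements of $G(E\setminus S)$, and the same case analysis of products completes the argument.
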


\begin{proof}
Write $R = (I \times I) \cup \{(\mu,\mu) \mid \mu \in G(E)\}$ where $I$ is an ideal of $G(E)$, and let $F = E\setminus (I \cap E^0)$. Define $\varphi : G(E) \to G(F)$ by 
$$\varphi (xy^{-1}) = 
\left\{ \begin{array}{ll}
xy^{-1} & \text{if } x,y \in \pth(F)\\
0 & \text{otherwise}
\end{array}\right.$$
for all $x,y \in \pth(E)$, and $\varphi(0)=0$. We note that if $uv^{-1} \in I$ for some $u,v \in \pth(E)$ with $\ra(u)=\ra(v)$, then $\ra(u) \in I$, by Corollary~\ref{Green's}(3), and hence $u,v \not\in \pth(F)$. It follows that $\varphi(\mu) = 0$ if and only if $\mu \in I$, for all $\mu \in G(E)$.

To show that $\varphi$ is a homomorphism, let $\mu, \nu \in G(E)$. If either $\mu \in I$ or $\nu \in I$, then $\mu \nu \in I$, and therefore $\varphi(\mu)\varphi(\nu)=0=\varphi(\mu \nu)$. Let us therefore suppose that $\mu, \nu \not\in I$. Then $\varphi(\mu) = \mu$ and $\varphi(\nu) = \nu$, by the definition of $\varphi$ and the previous paragraph. Thus, if $\mu \nu = 0$, then $$\varphi(\mu)\varphi(\nu) = \mu\nu = 0 = \varphi(\mu \nu).$$ Let us therefore further assume that $\mu \nu \neq 0$, and write $\mu = uv^{-1}$, $\nu = xy^{-1}$ ($u,v,x,y \in \pth(E)$). Then there is some $t \in \pth(E)$ such that either $v=xt$ or $x=vt$. In the first case $uv^{-1}xy^{-1} = ut^{-1}y^{-1}$. Since $uv^{-1}\not\in I$ and $uv^{-1}\GJ ut^{-1}y^{-1}$, by Corollary~\ref{Green's}(3), it follows that $ut^{-1}y^{-1}\not\in I$. Thus $$\varphi(\mu)\varphi(\nu) = \varphi(uv^{-1})\varphi(xy^{-1}) = uv^{-1}xy^{-1} =  ut^{-1}y^{-1} = \varphi(ut^{-1}y^{-1}) = \varphi(\mu \nu).$$ If, on the other hand, $x=vt$, then $uv^{-1}xy^{-1} = uty^{-1}$. Again, since $xy^{-1} \not\in I$ and $xy^{-1}\GJ uty^{-1}$, by Corollary~\ref{Green's}(3), it follows that $uty^{-1}\notin I$. Thus $$\varphi(\mu)\varphi(\nu) = \varphi(uv^{-1})\varphi(xy^{-1}) = uv^{-1}xy^{-1} =  uty^{-1} = \varphi(uty^{-1}) = \varphi(\mu \nu).$$ Since in every case $\varphi(\mu)\varphi(\nu)=\varphi(\mu \nu)$, we conclude that $\varphi$ is a homomorphism.

Since $\varphi(\mu) = 0$ if and only if $\mu \in I$, for all $\mu \in G(E)$, it follows that $R = \{(\mu, \nu) \in G(E) \mid \varphi(\mu) = \varphi(\nu)\}$, which, by definition, is the kernel of $\varphi$. Since $\varphi$ is clearly surjective, by the first isomorphism theorem for semigroups \cite[Theorem 1.5.2]{Howie}, $G(E)/R \cong G(F)$.
\end{proof}

Turning to non-Rees congruences, the next proposition shows how they arise.

\begin{proposition} \label{Rees-char}
Let $E$ be a graph and $R \subseteq G(E) \times G(E)$ a congruence. Then $R$ is a non-Rees congruence if and only if there exists $v \in E^0$ such that $\, (v, \mu) \in R$ for some $\mu \in G(E)\setminus \{v\}$, but $\, (v,0) \notin R$.

Moreover, if $v \in E^0$ is such that $\, (v, \mu) \in R$ for some $\mu \in G(E)\setminus \{v\}$, but $\, (v,0) \notin R$, then $v$ must satisfy the following conditions.
\begin{enumerate}
\item[$(1)$] Let $S = \{\mu \in G(E) \mid (v, \mu) \in R\}$. Then $S$ is an inverse semigroup, and every element of $S$ is of the form $xpx^{-1}$ or $xp^{-1}x^{-1}$ for some $x,p\in \pth(E)$ satisfying $\, \so(x) = v$ and $\, \ra(x)=\so(p)=\ra(p)$.
\item[$(2)$] There exists $e \in E^1$ with $\, \so(e)=v$, such that every $p \in \pth(E)\setminus E^0$ with $\, \so(p)=v$ and $\, \ra(p) = \ra(e)$ is of the form $p = et$ for some $t \in \pth(E)$.
\end{enumerate}
\end{proposition}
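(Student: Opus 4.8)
The plan is to let $I=\{\mu\in G(E)\mid(\mu,0)\in R\}$ be the $R$-class of $0$; since $0$ is a zero element, $I$ is an ideal, and one checks at once that $(I\times I)\cup\Delta\subseteq R$ always holds, so $R$ is a Rees congruence exactly when this inclusion is an equality. Because $(\mu,0)\in R$ together with $(\mu,\nu)\in R$ forces $(\nu,0)\in R$, any off-diagonal pair of $R$ with one entry in $I$ has both entries in $I$; hence $R$ is non-Rees precisely when there exist $\mu\neq\nu$ with $(\mu,\nu)\in R$ and $\mu,\nu\notin I$. The reverse implication of the stated equivalence is then immediate: if $v\in E^0$ satisfies $(v,\mu)\in R$ with $\mu\neq v$ and $(v,0)\notin R$, then $v\notin I$ and the pair $(v,\mu)$ witnesses that $R$ is non-Rees.

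For the forward implication I must manufacture a vertex. Starting from $\mu\neq\nu$ in $R$ with $\mu,\nu\notin I$, write $\mu=ab^{-1}$ and $\nu=cd^{-1}$. By Corollary~\ref{Green's}(4) the elements $\mu$ and $\nu$ are not $\GH$-related; using that a congruence on an inverse semigroup is compatible with inversion (a standard fact; see \cite{Howie}), both pairs $(\mu\mu^{-1},\nu\nu^{-1})$ and $(\mu^{-1}\mu,\nu^{-1}\nu)$ lie in $R$, and since $\mu,\nu$ are not $\GH$-related at least one of them consists of distinct idempotents, say $(xx^{-1},yy^{-1})\in R$ with $xx^{-1}\neq yy^{-1}$; a short ideal argument ($xx^{-1}=x\,\ra(x)\,x^{-1}$) shows neither lies in $I$. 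One would like to conjugate by $x^{-1}(\,\cdot\,)x$ to send $xx^{-1}$ to the vertex $\ra(x)$, but this can collapse the pair; the remedy is to conjugate by the \emph{shorter} of $x,y$. Indeed $x^{-1}(yy^{-1})x\neq0$ (otherwise $(\ra(x),0)\in R$, contradicting $\ra(x)\notin I$), so $x,y$ are comparable, and conjugating by whichever of $x,y$ is a prefix of the other produces a vertex $v$ with $v\notin I$ and $(v,ss^{-1})\in R$, where $s$ is the nonempty remaining segment; thus $ss^{-1}\neq v$, and $v$ is the desired vertex.

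Now fix $v$ as in the second assertion of the proposition and set $S=\{\mu\mid(v,\mu)\in R\}$. As the $R$-class of the idempotent $v=v\cdot v$, the set $S$ is closed under multiplication (if $\alpha,\beta\in S$ then $(v,\alpha\beta)=(v\cdot v,\alpha\beta)\in R$) and, by inversion-compatibility, under inverses, so it is an inverse subsemigroup; moreover $0\notin S$ because $(v,0)\notin R$. For $\alpha=xy^{-1}\in S$, left- and right-multiplying $(v,\alpha)\in R$ by $v$ gives $vxy^{-1}\neq0\neq xy^{-1}v$ (otherwise $(v,0)\in R$), whence $\so(x)=\so(y)=v$. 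Since $\alpha^2\in S$ and $0\notin S$, we have $\alpha^2\neq0$, so $y^{-1}x\neq0$ and therefore $x,y$ are comparable; writing the longer as the shorter followed by a closed path $p$ at $\ra(x)$ yields exactly the two forms $xpx^{-1}$ and $xp^{-1}x^{-1}$ of Property~(1), with $\so(x)=v$ and $\ra(x)=\so(p)=\ra(p)$.

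The construction of the edge in Property~(2) is the crux, and the step I expect to be the main obstacle. Since $\mu\neq v$, Corollary~\ref{Green's}(4) shows that $\mu\mu^{-1}$ and $\mu^{-1}\mu$ are not both equal to $v$, so $S$ contains a nontrivial idempotent $zz^{-1}$ with $z$ a nonempty path and $\so(z)=v$; let $e$ be the first edge of $z$. Multiplying $(v,zz^{-1})\in R$ by $ee^{-1}$ and using $zz^{-1}\leq ee^{-1}\leq v$ in the semilattice of idempotents yields $(v,ee^{-1})\in R$, and the ideal argument again gives $\ra(e)\notin I$ (for $\ra(e)\in I$ would force $ee^{-1}=e\,\ra(e)\,e^{-1}\in I$ and hence $(v,0)\in R$). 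Finally, for any nonempty path $p$ with $\so(p)=v$ and $\ra(p)=\ra(e)$, conjugating $(v,ee^{-1})\in R$ by $p$ gives $(\ra(e),p^{-1}ee^{-1}p)\in R$; if the first edge of $p$ were different from $e$, then $p^{-1}e=0$ by (CK1), forcing $(\ra(e),0)\in R$ and contradicting $\ra(e)\notin I$. Hence every such $p$ begins with $e$, that is $p=et$, which is Property~(2). Throughout, the decisive mechanism is that incomparable paths (or paths with the wrong first edge) annihilate under (CK1), converting a would-be nontrivial relation into a pair $(\text{vertex},0)\in R$ that the hypothesis $(v,0)\notin R$ forbids; controlling this collapse is what drives both the vertex-production step and the final edge argument.
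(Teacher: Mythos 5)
Your proof is correct, and while the decisive mechanism is the same as in the paper --- multiplying a relation in $R$ on the left and right by suitable paths so that one entry becomes a vertex, and using (CK1)-annihilation to produce forbidden pairs $(w,0)$ with $w$ a vertex --- your organization genuinely differs at two points. For the equivalence, the paper argues contrapositively: assuming every vertex with a nontrivial relation satisfies $(v,0)\in R$, it shows by normal-form manipulation that every nontrivially related element is $R$-related to $0$, so that $R$ is the Rees congruence of the ideal generated by such elements. You argue in the forward direction instead: you characterize Rees congruences as those equal to $(I\times I)\cup\Delta$ with $I$ the $R$-class of $0$, reduce an off-diagonal pair outside $I$ to a pair of distinct idempotents $(xx^{-1},yy^{-1})$ using Corollary~\ref{Green's}(4) and the inversion-compatibility of congruences on inverse semigroups, and then conjugate by the shorter of $x,y$; the collapse that the paper's contrapositive happily absorbs (a product becoming $0$) is exactly what your setup must rule out, and membership outside $I$ is what rules it out. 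This shortens the case analysis at the price of invoking standard facts about inverse semigroups that the paper's bare-hands computation avoids. For condition (2) the divergence is sharper: the paper supposes (2) fails for every edge at $v$ and manufactures an annihilating path $s=ftq$ to reach the contradiction $(v,0)\in R$, whereas you exhibit the edge $e$ directly as the first edge of a nontrivial idempotent $zz^{-1}\in S$, upgrade $(v,zz^{-1})$ to $(v,ee^{-1})$ via the idempotent order, prove $\ra(e)\notin I$, and show that any competing path $p$ not beginning with $e$ would force $(\ra(e),0)\in R$. Your direct route is cleaner and yields the extra relation $(v,ee^{-1})\in R$, which is precisely the generator that Lemma~\ref{non-Rees} later uses to build a non-Rees congruence, so it exposes the connection between the two results; the paper's route keeps the whole argument self-contained in explicit normal-form computations.
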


\begin{proof}
Suppose that for all $v \in E^0$ such that $(v, \mu) \in R$ for some $\mu \in G(E)\setminus \{v\}$, we have $(v,0) \in R$. Let $\nu \in G(E) \setminus \{0\}$ be any element such that $(\nu, \mu) \in R$ for some $\mu \in G(E)\setminus \{\nu\}$, and write $\nu = xy^{-1}$ ($x,y \in \pth(E)$). We shall first show that $(\nu,0)\in R$.

We may assume that $\mu \neq 0$, and write $\mu = st^{-1}$ ($s,t \in \pth(E)$). Since $\nu \neq \mu$, either $x\neq s$ or $y \neq t$. Let us assume that $x\neq s$, since the other case can be treated similarly. Also, since $R$ is an equivalence relation, $(\nu,0) \in R$ if and only if $(\mu,0) \in R$. Thus, interchanging the roles of $\mu$ and $\nu$ if necessary, we may assume that $|x| \leq |s|$. Now, $(\ra(x), x^{-1}st^{-1}y) = (x^{-1}\nu y, x^{-1}\mu y) \in R$. Since $|x| \leq |s|$ and $x \neq s$, either $x^{-1}s = 0$ or $x^{-1}s \in \pth(E)\setminus E^0$. In either case $x^{-1}st^{-1}y \neq \ra(x)$, from which it follows that $(\ra(x), 0) \in R$, by assumption. Since $R$ is a congruence, and $\nu = x\ra(x)y^{-1}$, this implies that $(\nu,0)\in R$. It follows that  $G(E)^1 \nu G(E)^1 \times \{0\} \subseteq R$, and hence $G(E)^1 \nu G(E)^1 \times G(E)^1 \nu G(E)^1 \subseteq R$, as $R$ is an equivalence relation. Letting $I \subseteq G(E)$ be the ideal generated by all $\nu \in G(E) \setminus \{0\}$ such that $(\nu, \mu) \in R$ for some $\mu \in G(E)\setminus \{\nu\}$, we conclude that $I \times I \subseteq R$. It follows that $R$ is the Rees congruence corresponding to $I$.

Conversely, suppose that $R$ is a Rees congruence, and write $$R = (I \times I) \cup \{(\mu,\mu) \mid \mu \in G(E)\},$$ where $I$ is an ideal of $G(E)$. If $v \in E^0$ is such that $(v, \mu) \in R$ for some $\mu \in G(E)\setminus \{v\}$, then $v \in I$. Hence $(v,0) \in I \times I \subseteq R$, concluding the proof of the first claim.

For the remainder of the proof, let $v \in E^0$ be such that $(v, \mu) \in R$ for some $\mu \in G(E)\setminus \{v\}$, but $(v,0) \notin R$. To prove (1), let $\mu \in S$, and write $\mu = xy^{-1}$ ($x,y \in \pth(E)$). Then $(v, vxy^{-1}v) \in R$, and since $vxy^{-1}v \neq 0$, this implies that $v = \so(x) = \so (y)$. Thus, for all $\mu, \nu \in S$ we have $(\nu, \mu\nu) = (v\nu, \mu\nu) \in R$. Since $(v, \nu) \in R$, it follows that $(v, \mu\nu) \in R$, and hence $\mu\nu \in S$, showing that $S$ is a semigroup. Furthermore, for all $\mu = xy^{-1} \in S$ we have $\mu \mu = xy^{-1}xy^{-1} \in S$, which implies that $y^{-1}x \neq 0$, and therefore either $y=xt$ or $x=yt$ for some $t \in \pth(E)$. In the first case, $\mu = xt^{-1}x^{-1}$, while in the second case, $\mu = yty^{-1}$, from which the description of the elements of $S$ in (1) follows. 

To show that $S$ is an inverse semigroup, let $\mu \in S$. Then, by the above, either $\mu = xt^{-1}x^{-1}$ or $\mu = xtx^{-1}$ for some $x,t \in \pth(E)$ with $\so(x)=v$. Let us assume that $\mu = xt^{-1}x^{-1}$, since the other case can be treated similarly. Then $(xx^{-1}, \mu) = (vxx^{-1}, \mu xx^{-1}) \in R$, and therefore $xx^{-1} \in S$. Noting that $(xtx^{-1}, xx^{-1}) = (vxtx^{-1}, \mu xtx^{-1}) \in R,$ we conclude that $\mu^{-1} = xtx^{-1} \in S$, and hence $S$ is an inverse semigroup. 

To prove (2), first note that by (1) and the assumption that $(v, \mu) \in R$ for some $\mu \in G(E)\setminus \{v\}$, the vertex $v$ cannot be a sink. Now suppose that for all $e \in E^1$ with $\so(e)=v$, there exist $f \in E^1\setminus \{e\}$ and $t \in \pth(E)$ satisfying $\so(f)=v$, $\ra(f) = \so(t)$, and $\ra(t)=\ra(e)$. We shall show that in this case $(v, 0) \in R$, contradicting our choice of $v$.

Let $\mu \in G(E)\setminus \{v\}$ be such that $(v, \mu) \in R$. By (1), either $\mu=xpx^{-1}$ or $\mu=xp^{-1}x^{-1}$ for some $x,p \in \pth(E)$ with $\so(x) =v$ and $\ra(x)=\so(p)=\ra(p)$. Let us suppose that $\mu=xpx^{-1}$, since the other case can be treated analogously. Then $xp \neq v$, since $\mu \neq v$. Write $xp=eq$ for some $e \in E^1$ and $q \in \pth(E)$. Then, by assumption there is some $f \in E^1\setminus \{e\}$ and $t \in \pth(E)$ satisfying $\so(f)=v$, $\ra(f) = \so(t)$, and $\ra(t)=\ra(e)$. Letting $s=ftq$, we have $s^{-1}xp=q^{-1}t^{-1}f^{-1}eq=0$, and therefore $$(\mu, 0) = (xpx^{-1},0) = (xps^{-1}vsx^{-1},xps^{-1}(xpx^{-1})sx^{-1}) = (xps^{-1}vsx^{-1},xps^{-1}\mu sx^{-1}) \in R.$$ Since $(v, \mu) \in R$ and $R$ is an equivalence relation, this implies that $(v,0) \in R$, as desired.
\end{proof}

Theorem 13 in~\cite{MMMP}, along with the subsequent comment, says that if $S$ is any inverse subsemigroup of $G(E)$ such that $\mu\nu \neq 0$ for all $\mu, \nu \in S$, then $S$ is generated as a semigroup by an element of the form $xpx^{-1}$ ($x,p \in \pth(E)$) and the idempotents in $S$. In particular, this applies to the inverse semigroup $S$ in Proposition~\ref{Rees-char}(1).

The next lemma shows that any vertex satisfying condition (2) in Proposition~\ref{Rees-char} produces a non-Rees congruence.

\begin{lemma} \label{non-Rees}
Let $E$ be a graph, $e \in E^1$, and $v=\so(e)$. Suppose that every $p \in \pth(E)\setminus E^0$ with $\, \so(p)=v$ and $\, \ra(p) = \ra(e)$ is of the form $p = et$ for some $t \in \pth(E)$. Then the least congruence $R \subseteq G(E) \times G(E)$ containing $\, (v, ee^{-1})$ is not a Rees congruence.
\end{lemma}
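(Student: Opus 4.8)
The plan is to apply the characterization in Proposition~\ref{Rees-char}. Since $(v,ee^{-1})$ lies in $R$ by construction and $ee^{-1}\neq v$ (in the unique reduced form of Section~\ref{gis-sect} the element $ee^{-1}$ is $e\cdot e^{-1}$ with $e$ of positive length, whereas $v$ is a vertex), the vertex $v$ will witness that $R$ is a non-Rees congruence \emph{provided} we can show that $(v,0)\notin R$. Thus the whole problem reduces to separating $v$ from $0$ in $R$, and this is exactly where the hypothesis on $e$ must enter.

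To separate them I would exhibit a single homomorphism $\psi$ out of $G(E)$ with $\psi(v)=\psi(ee^{-1})$ but $\psi(v)\neq\psi(0)$. The kernel of any such $\psi$ is a congruence containing the generating pair $(v,ee^{-1})$ but not $(v,0)$; since $R$ is the \emph{least} congruence containing $(v,ee^{-1})$, we would get $R\subseteq\ker\psi$ and hence $(v,0)\notin R$, as required. For $\psi$ I would use a sub-representation of the natural faithful action of $G(E)$ by partial bijections on $\pth(E)$ underlying Proposition~\ref{repres}, in which $ab^{-1}$ sends $bz\mapsto az$ for every $z$ with $\so(z)=\ra(b)$, is undefined on paths not beginning with $b$, and $0$ acts as the empty map. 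Because this action preserves ranges, the set $X=\{z\in\pth(E)\mid \ra(z)=\ra(e)\}$ is invariant, so restriction to $X$ gives a homomorphism $\psi$ into the inverse monoid of partial bijections of $X$. Under $\psi$ the idempotent $v$ acts as the identity on the paths from $v$ to $\ra(e)$, while $ee^{-1}$ acts as the identity on the paths to $\ra(e)$ that begin with $e$. The hypothesis says precisely that every positive-length path from $v$ to $\ra(e)$ begins with $e$, so these two domains coincide, giving $\psi(v)=\psi(ee^{-1})$; as both contain $e$ itself, $\psi(v)\neq\psi(0)$. This settles the case $\ra(e)\neq v$.

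The main obstacle — and, I suspect, the source of the gap noted in the acknowledgements — is the case where $e$ is a loop, $\ra(e)=v$. Then the \emph{trivial} path $v$ lies in $X$ and in the domain of $\psi(v)$ but not in that of $\psi(ee^{-1})$, since a vertex begins with no edge, so the construction above breaks down. Moreover no range-invariant subset of $\pth(E)$ can repair it: if an invariant $X$ contains a path $w$, then applying the elements $xw^{-1}$ shows that $X$ contains \emph{every} path with range $\ra(w)$, so $X$ must be a union of full range-classes, and any such $X$ witnessing $\psi(v)\neq 0$ is forced to contain the trivial path $v$.

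To handle the loop case I would instead \emph{localize at} $e$, formally inverting the loop: represent $G(E)$ on the set of reduced words $e^{-n}z$ with $n\ge 0$, $z\in\pth(E)$, $\ra(z)=v$, letting $e$ act invertibly and reducing via $e^{-n}(ez)=e^{-(n-1)}z$. The hypothesis guarantees that every cycle at $v$ is a power of $e$ (any return path to $v$ begins with $e$, and one peels off copies of $e$ inductively), which is exactly what makes this action well defined and consistent. On it both $v$ and $ee^{-1}$ act as the identity while $0$ acts as the empty map, so again $\psi(v)=\psi(ee^{-1})\neq\psi(0)$; this is the graph analogue of the bicyclic monoid mapping onto $\Z$. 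In either case $(v,0)\notin R$, and Proposition~\ref{Rees-char} then shows that $R$ is not a Rees congruence.
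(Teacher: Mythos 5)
Your overall strategy is sound and genuinely different from the paper's. The paper proves the lemma by computing the least congruence $R$ explicitly: it analyzes the set of generating pairs $P=\{(\mu v\nu,\mu ee^{-1}\nu)\}$, exhibits an ideal $I$ of $G(E)$ with $v,\ra(e)\notin I$, and shows that $R=\overline{S}\cup(I\times I)$ for an explicit relation $\overline{S}$, from which $(v,0)\notin R$ is read off. You instead separate $v$ from $0$ by a single homomorphism $\psi$ with $\psi(v)=\psi(ee^{-1})\neq\psi(0)$ and invoke minimality of $R$ together with Proposition~\ref{Rees-char}; this logic is correct, and your treatment of the case $\ra(e)\neq v$ is complete and clean: the prefix action $bz\mapsto az$ preserves ranges, so restriction to the range class of $\ra(e)$ is a homomorphism, and the hypothesis says exactly that $\psi(v)$ and $\psi(ee^{-1})$ are partial identities with the same domain, a domain containing $e$.

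The loop case, however, contains a genuine gap, not just unchecked routine details. Your localized set is built at the wrong end of the paths. The reduction rule $e^{-n}(ez)=e^{-(n-1)}z$ presupposes $\so(z)=v$ (not $\ra(z)=v$ as written), and under any reading every element of your set formally begins at $v$, so that, as you yourself assert, $\psi(v)$ is the identity on the whole set. This is incompatible with the defining relations of $G(E)$ as soon as $E$ has an edge $f\neq e$ with $\so(f)=u\neq v$ and $\ra(f)=v$, a configuration the lemma's hypothesis allows (e.g.\ a loop $e$ at $v$ plus one edge $u\to v$): relation (V) gives $\psi(u)=\psi(u)\psi(v)=\psi(0)=\emptyset$, relation (E1) ($uf=f$) then gives $\psi(f)=\emptyset$, yet relation (CK1) ($f^{-1}f=\ra(f)=v$) demands $\psi(f^{-1})\psi(f)=\psi(v)\neq\emptyset$. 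So on your set no action of $G(E)$ exists at all, and no appeal to ``every cycle at $v$ is a power of $e$'' can create one; the same obstruction persists if you enlarge the set to include all genuine paths, since $fy$ is undefined whenever $y$ is a formal word $e^{-n}z$. The repair is forced by your own observation that invariant sets are unions of range classes: the formal powers of $e$ must be appended at the range end. Concretely, take formal products $ze^{k}$ with $k\in\Z$ and $z$ a path with $\ra(z)=v$ meeting $v$ only at its terminus; the hypothesis yields the unique factorization $p=ze^{k}$ ($k\geq 0$) of every genuine path with range $v$, one lets $e$ act by $e^{k}\mapsto e^{k+1}$ and the other edges act by prefixing, and one must then verify (V), (E1), (E2) and (CK1), with the hypothesis entering precisely in (CK1) for edges $f\neq e$ with $\so(f)=v$, where one needs that no path returns from $\ra(f)$ to $v$. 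That verification is where the hypothesis actually does its work in the loop case, and it is absent from your sketch.
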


\begin{proof}
We begin by describing the elements of $P = \{(\mu v \nu, \mu ee^{-1} \nu) \mid \mu, \nu \in G(E)\}$, since $R$ is the least equivalence relation containing $P$.

For any $x,y \in \pth(E)$ with $\ra(x) = \ra(y)$, we have 
$$(xy^{-1}v, xy^{-1}ee^{-1}) = 
\left\{ \begin{array}{ll}
(x,xee^{-1}) & \text{if } y=v\\
(xy^{-1},xy^{-1}) & \text{if } y=et \text{ for some } t\in \pth(E)\\
(xy^{-1},0) & \text{if } \so(y)=v, y\neq v, \text{ and } y\neq et \text{ for all } t\in \pth(E)\\
(0,0) & \text{otherwise}.
\end{array}\right.$$
Next, let us describe products of the form $(x\nu,xee^{-1}\nu)$ belonging to $P$; i.e., ones arising from multiplying elements of the first type above on the right by $\nu \in G(E)$. For any $p,r \in \pth(E)$ with $\ra(p) = \ra(r)$, we have
$$(xpr^{-1}, xee^{-1}pr^{-1}) = 
\left\{ \begin{array}{ll}
(xr^{-1}, xee^{-1}r^{-1}) & \text{if } p=v\\
(xpr^{-1}, xpr^{-1}) & \text{if } p=et \text{ for some } t\in \pth(E)\\
(xpr^{-1},0) & \text{if } \so(p)=v, p\neq v, \text{ and } p\neq et \text{ for all } t\in \pth(E)\\
(0,0) & \text{otherwise}.
\end{array}\right.$$ We note that the elements $xr^{-1}$ and $xee^{-1}r^{-1}$, as on the first line of the previous display, are never zero, since $v = \ra(x) = \ra(r) = \so(e)$. From the computations above we see that $$P \subseteq \{(xr^{-1}, xee^{-1}r^{-1}) \mid x,r \in \pth(E),  \ra(x) = v = \ra(r)\} \cup (G(E)\times \{0\}) \cup \Delta,$$ where $\Delta = \{(\mu,\mu) \mid \mu \in G(E)\}$. To better describe $P$, we next turn to products of the form $(xy^{-1}\nu, 0)$ belonging to $P$; i.e., ones arising from multiplying elements of the third type in the first display above on the right by $\nu \in G(E)$. 

Let $I$ be the set of all elements of $G(E)$ that occur as the first coordinates of such tuples, that is $$I = \{xy^{-1}pr^{-1} \mid p,r,x,y \in \pth(E), \ \so(y)=v, \ y\neq v, \text{ and } y\neq et \text{ for all } t\in \pth(E)\},$$ and note that for any $y \in \pth(E)$ satisfying the conditions in the definition of $I$ we have $\ra (y) = y^{-1}y \in I$. We shall show that $I$ contains the ideal generated by $\ra (y)$ (provided $I\neq\emptyset$). Any nonzero element of this ideal can be expressed in the form $st^{-1}\ra(y)wz^{-1}$, for some $s,t,w,z \in \pth(E)$ satisfying $\so(t)=\ra(y)=\so(w)$. But, $$st^{-1}\ra(y)wz^{-1} = st^{-1}y^{-1}ywz^{-1} =s(yt)^{-1}(yw)z^{-1},$$ and the latter is an element of $I$, by our choice of $y$. Hence $I$ contains the ideal generated by $\ra (y)$. Since every $xy^{-1}pr^{-1} \in I$ can be expressed as $x\ra(y)y^{-1}pr^{-1}$, it further follows that $I$ is the ideal generated by all vertices $\ra(y)$, where $y \in \pth(E)$ satisfies the conditions in the definition of $I$ (if such paths exist). 

Moreover, for all $y\in \pth(E)$ of this form and all $p \in \pth(E)$ such that $\so(p) = \ra(y)$, it cannot be the case that $\ra(p) = v$, since then $s=ype$ would satisfy $\so(s)=v$ and $\ra(s) = \ra(e)$, but would not be of the form $et$ for all $t \in \pth(E)$, contrary to hypothesis. It follows that $J_{v} \not\leq_{\GJ} J_{\ra(y)}$, by Lemma~\ref{orders}(3), which implies that $v \notin I$. Similarly, for all $y$ as above and $p \in \pth(E)$ such that $\so(p) = \ra(y)$, it cannot be the case that $\ra(p) = \ra(e)$, since then $s=yp$ would satisfy $\so(s)=v$ and $\ra(s) = \ra(e)$, but not be of the form $et$ for all $t \in \pth(E)$, contrary to hypothesis. It follows that $J_{\ra(e)} \not\leq_{\GJ} J_{\ra(y)}$, and therefore $\ra(e) \notin I$.

We also observe that for any $(xpr^{-1},0) \in P$ of the form given in the third line of the description of $(xpr^{-1}, xee^{-1}pr^{-1})$ above, $xpr^{-1} \in I$, since setting $y=p$, we have $xpr^{-1} = xpy^{-1}yr^{-1}$, and $y=p$ satisfies the conditions in the definition of $I$. It follows that if $(\mu,0) \in P$ for some $\mu \in G(E) \setminus \{0\}$, then $\mu \in I$, and hence $$P \setminus \Delta = \{(xr^{-1}, xee^{-1}r^{-1}) \mid x,r \in \pth(E),  \ra(x) = v = \ra(r)\} \cup ((I\setminus \{0\}) \times \{0\}).$$ 

Now, let $$S = \{(xr^{-1}, xee^{-1}r^{-1}),(xee^{-1}r^{-1}, xr^{-1}) \mid x,r \in \pth(E),  \ra(x) = v = \ra(r)\} \cup \Delta,$$ and let $\overline{S}$ be the transitive closure of $S$. Then it is easy to see that $\overline{S}$ is an equivalence relation. We claim that $R = \overline{S} \cup (I \times I)$, from which it follows that if $(\mu, 0) \in R$ for some $\mu \in G(E) \setminus \{0\}$, then $\mu \in I$. Since, as shown above, $v \notin I$, this implies that $(v,0) \notin R$, and hence $R$ is not a Rees congruence, by Proposition~\ref{Rees-char}.

Since $P \subseteq \overline{S} \cup (I \times I) \subseteq R$, to prove that $R = \overline{S} \cup (I \times I)$, it is enough to show that $\overline{S} \cup (I \times I)$ is an equivalence relation. Since $\overline{S}$ and $I \times I$ are both equivalence relations, it suffices to show that if $(\mu,\nu) \in \overline{S} \setminus \Delta$, then $\mu \notin I$. Now, if $(\mu,\nu) \in \overline{S} \setminus \Delta$, then either $\mu = xr^{-1}$ or  $\mu = xee^{-1}r^{-1}$ for some $x,r \in \pth(E)$ with $\ra(x) = v = \ra(r)$. In the first case, if $\mu = xr^{-1} \in I$, then $v=x^{-1}(xr^{-1})r = x^{-1}\mu r$ would imply that $v \in I$, contradicting the description of $I$ above. In the second case, $\ra(e) = e^{-1}x^{-1}(xee^{-1}r^{-1})re = e^{-1}x^{-1}\mu re$ would imply that $\ra(e) \in I$, again producing a contradiction. Thus if $(\mu,\nu) \in \overline{S} \setminus \Delta$, then $\mu \notin I$, as desired.
\end{proof}

Combining the previous proposition and lemma we obtain the following generalization of a result \cite[Theorem 3.2.15]{Jones} of Jones, which deals only with graphs where every vertex is the source of some cycle and has out-degree at least $2$.

\begin{theorem} \label{only-Rees}
The following are equivalent for any graph $E$.
\begin{enumerate}
\item[$(1)$] The only congruences on $G(E)$ are Rees congruences.
\item[$(2)$] For every $e \in E^1$ there exists $p \in \pth(E) \setminus E^0$ with $\, \so(p) = \so(e)$ and $\, \ra(p)=\ra(e)$, such that $p \neq et$ for all $t \in \pth(E)$.
\end{enumerate}
\end{theorem}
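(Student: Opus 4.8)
The plan is to prove the theorem in contrapositive form, establishing instead the equivalence of the negations $\neg(1)$ and $\neg(2)$. The crucial observation is that condition (2) is a ``local'' statement about individual edges whose negation reads: \emph{there exists $e \in E^1$ such that every $p \in \pth(E) \setminus E^0$ with $\so(p) = \so(e)$ and $\ra(p) = \ra(e)$ is of the form $p = et$ for some $t \in \pth(E)$.} This is precisely the hypothesis of Lemma~\ref{non-Rees} (taking $v = \so(e)$), and it is also exactly the conclusion of part (2) of the ``moreover'' clause of Proposition~\ref{Rees-char}. Once this match is noticed, the theorem falls out by combining those two earlier results, with essentially no additional computation.

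For the direction $\neg(2) \Rightarrow \neg(1)$, I would assume that (2) fails and fix an edge $e \in E^1$ witnessing its negation, so that every $p \in \pth(E) \setminus E^0$ with $\so(p)=\so(e)$ and $\ra(p) = \ra(e)$ has the form $p = et$. Setting $v = \so(e)$, the hypothesis of Lemma~\ref{non-Rees} is then satisfied, so the least congruence $R$ containing $(v, ee^{-1})$ is not a Rees congruence. Hence $G(E)$ admits a non-Rees congruence, and (1) fails.

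For the converse direction $\neg(1) \Rightarrow \neg(2)$, I would assume (1) fails, so that $G(E)$ carries some non-Rees congruence $R$. By the first assertion of Proposition~\ref{Rees-char} there is a vertex $v \in E^0$ and an element $\mu \in G(E)\setminus\{v\}$ with $(v,\mu) \in R$ but $(v,0) \notin R$. Applying part (2) of the ``moreover'' clause of Proposition~\ref{Rees-char} to this $v$ yields an edge $e \in E^1$ with $\so(e) = v$ such that every $p \in \pth(E)\setminus E^0$ with $\so(p) = v$ and $\ra(p) = \ra(e)$ is of the form $p = et$. This edge $e$ witnesses exactly the negation of (2), so (2) fails. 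Combining the two directions gives $\neg(1) \iff \neg(2)$, hence $(1) \iff (2)$.

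The genuine mathematical content has already been absorbed into Proposition~\ref{Rees-char} and Lemma~\ref{non-Rees}, so no serious obstacle remains at this stage. The only point demanding care is the bookkeeping of quantifiers: one must negate (2) correctly (an existential over edges, with a universal over paths $p$) and verify that the resulting statement is word-for-word the hypothesis of Lemma~\ref{non-Rees} on the one hand, and the output of Proposition~\ref{Rees-char} on the other. I expect this alignment of quantifier structure, rather than any new estimate or construction, to be the sole thing worth stating explicitly in the written proof.
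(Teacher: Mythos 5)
Your proposal is correct and follows exactly the paper's own argument: the direction $\neg(2)\Rightarrow\neg(1)$ is Lemma~\ref{non-Rees} applied to a witnessing edge, and the direction you phrase as $\neg(1)\Rightarrow\neg(2)$ is precisely the contrapositive of the paper's appeal to Proposition~\ref{Rees-char} (first claim plus part (2) of the ``moreover'' clause). The only difference is presentational — the paper states both directions in two terse sentences, while you spell out the quantifier bookkeeping explicitly — but the mathematical content is identical.
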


\begin{proof}
If (2) holds, then $G(E)$ cannot have any non-Rees congruences, by Proposition~\ref{Rees-char}. Conversely, if (2) does not hold, then $G(E)$ has at least one non-Rees congruence, by Lemma~\ref{non-Rees}.
\end{proof}

The following easy consequence of this theorem generalizes a result \cite[Theorem 3]{AH} of Ash and Hall, which pertains only to simple graphs.

\begin{corollary} \label{cong-free}
Let $E$ be a graph such that $\, |G(E)| > 2$. Then $G(E)$ is congruence-free if and only if $E$ has only one strongly connected component, and each vertex in $E$ has out-degree at least $2$.
\end{corollary}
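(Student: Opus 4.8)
The plan is to combine Theorem~\ref{only-Rees} with an analysis of the two-sided ideals of $G(E)$, after first reformulating congruence-freeness in terms of Rees congruences. I would begin by recording that $\Delta$ and $G(E)\times G(E)$ are exactly the Rees congruences corresponding to the ideals $\{0\}$ and $G(E)$. It follows that $G(E)$ is congruence-free if and only if (i) every congruence on $G(E)$ is a Rees congruence, and (ii) the only ideals of $G(E)$ are $\{0\}$ and $G(E)$: if (i) and (ii) hold, then Rees congruences biject with ideals and so there are exactly two congruences, namely $\Delta$ and $G(E)\times G(E)$; conversely, congruence-freeness forbids any non-Rees congruence, giving (i), and forbids any proper nonzero ideal $I$, since such an $I$ would yield a Rees congruence $(I\times I)\cup\Delta$ distinct from both $\Delta$ and $G(E)\times G(E)$, giving (ii). By Theorem~\ref{only-Rees}, condition (i) is equivalent to condition (2) of that theorem.

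Next I would show that (ii) holds if and only if $E$ has exactly one strongly connected component. By Corollary~\ref{Green's}(3) this is equivalent to $G(E)$ having exactly one nonzero $\GJ$-class. If there is a single nonzero $\GJ$-class, then for any $\mu \neq 0$ every nonzero element $\nu$ satisfies $J_\nu \leq_{\GJ} J_\mu$, so the principal ideal $G(E)^1\mu G(E)^1$ is all of $G(E)$; hence $\{0\}$ and $G(E)$ are the only ideals. Conversely, if there are at least two nonzero $\GJ$-classes, then some such class $J_a$ (with $a$ a vertex) is not the maximum of the $\leq_{\GJ}$-poset, so there is a class $J_b\not\leq_{\GJ}J_a$, and by Lemma~\ref{orders}(3) the principal ideal generated by $a$ is a proper nonzero ideal, producing a third ideal.

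Combining the two reductions, it remains to prove that, assuming $E$ has a single strongly connected component and $|G(E)|>2$, condition (2) of Theorem~\ref{only-Rees} is equivalent to every vertex of $E$ having out-degree at least $2$. For one direction, given out-degree $\geq 2$ and a single component, for each edge $e$ I would choose a second edge $f\neq e$ with $\so(f)=\so(e)$ and append to it a path from $\ra(f)$ to $\ra(e)$ (which exists by strong connectivity, possibly of length $0$) to obtain a path $p\in\pth(E)\setminus E^0$ with $\so(p)=\so(e)$, $\ra(p)=\ra(e)$, and $p\neq et$ for all $t$, as required. For the converse I would argue contrapositively: a sink cannot occur, since with $|E^0|\geq 2$ it would contradict strong connectivity and with $|E^0|=1$ it would force $E^1=\emptyset$ and $|G(E)|=2$; and a vertex of out-degree exactly $1$ with unique out-edge $e$ forces any nonempty path with the prescribed source and range to begin with $e$, hence to have the form $et$, violating condition (2).

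The main obstacle is this final equivalence in the third paragraph: the path bookkeeping identifying condition (2) with out-degree $\geq 2$, and especially the correct treatment of the degenerate cases (sinks and the single-vertex graph). This is exactly where the hypothesis $|G(E)|>2$ is used, and it cannot be dropped, since the two-element semigroup $G(E)=\{v,0\}$ (arising from a single vertex with no edges) is congruence-free yet has a vertex of out-degree $0$.
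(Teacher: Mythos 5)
Your proof is correct and follows essentially the same route as the paper's: both reduce congruence-freeness via Theorem~\ref{only-Rees} together with the correspondence between strongly connected components, $\GJ$-classes, and ideals from Corollary~\ref{Green's}(3), and both use $|G(E)|>2$ only to exclude the single-vertex, edgeless graph. You spell out some steps the paper leaves terse (the principal-ideal argument and the path construction showing condition (2) of Theorem~\ref{only-Rees} follows from out-degree at least $2$ plus strong connectivity), but the decomposition is the same.
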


\begin{proof}
Suppose that $G(E)$ is congruence-free. Then the only congruences on $G(E)$ are Rees congruences. Thus $G(E)$ satisfies condition (1) of Theorem~\ref{only-Rees}, and hence also condition (2). In particular, each vertex in $E$ is either a sink or has out-degree at least $2$. Also, since every strongly connected component of $E$ corresponds to an ideal of $G(E)$, by Corollary~\ref{Green's}(3), and hence produces a congruence, there must be only one strongly connected component in $E$. This implies that either $E$ has no sinks (in which case every vertex has out-degree at least $2$), or $E$ consists of just one vertex and no edges. The latter situation is ruled out by our assumption that $|G(E)| > 2$.

Conversely, if $E$ has only one strongly connected component, then it has only one nonzero ideal, by Corollary~\ref{Green's}(3), and therefore only the Rees congruences $G(E) \times G(E)$ and $\{(\mu,\mu) \mid \mu \in G(E)\}$. If, in addition, each vertex in $E$ has out-degree at least $2$, then $E$ satisfies condition (2) of Theorem~\ref{only-Rees}, and hence no additional congruences on $G(E)$ are possible.
\end{proof}

Specializing further, we have an alternative proof of the following classical result about polycyclic monoids. (See, e.g., Section 3.4, Theorem 5 and Section 9.3, Theorem 5 in~\cite{Lawson}.) 

\begin{corollary}
The polycyclic monoid $P_n$ is congruence-free if and only if $n>1$.
\end{corollary}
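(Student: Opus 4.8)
The plan is to obtain this as an immediate specialization of Corollary~\ref{cong-free}. First I would recall that, by the definition given in Section~\ref{gis-sect}, $P_n = G(E)$ where $E$ is the graph with a single vertex $v$ and $n$ edges, each of which is a loop at $v$ (so $\so(e) = \ra(e) = v$ for every $e \in E^1$). The entire argument then reduces to reading off the two graph-theoretic conditions of Corollary~\ref{cong-free} from this very simple graph.

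Before applying the corollary I would verify its standing hypothesis $|G(E)| > 2$. Since $n \geq 1$, there is at least one loop $e$, and the elements $0$, $v$, and $e$ are three distinct members of $P_n$ (in fact $v, e, e^2, e^3, \dots$ are pairwise distinct paths, so $P_n$ is infinite). Hence $|P_n| > 2$ for every $n \geq 1$, and Corollary~\ref{cong-free} applies.

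Next I would translate each of the two conditions. Because $E$ has a single vertex, it has exactly one strongly connected component, so that condition holds automatically for all $n \geq 1$. The out-degree of $v$ equals the number of edges with source $v$, which is $n$; thus the requirement that every vertex have out-degree at least $2$ is equivalent to $n \geq 2$, i.e.\ $n > 1$.

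Finally, combining these observations, Corollary~\ref{cong-free} gives that $P_n = G(E)$ is congruence-free if and only if $E$ has a single strongly connected component (always true here) and $v$ has out-degree at least $2$ (true precisely when $n > 1$). This yields the desired equivalence. There is no genuine obstacle in this argument; the only step requiring even minor care is confirming the size hypothesis $|G(E)| > 2$, after which the conclusion is a direct reading of the out-degree of the unique vertex.
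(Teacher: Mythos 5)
Your proof is correct and takes essentially the same approach as the paper: both simply specialize Corollary~\ref{cong-free} to the graph with one vertex and $n$ loops, noting it has a single strongly connected component and that the vertex has out-degree $n$. Your explicit check of the hypothesis $|G(E)| > 2$ (via the infinitude of $P_n$) is a detail the paper leaves implicit, and it is carried out correctly.
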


\begin{proof}
As mentioned in Section~\ref{gis-sect}, $P_n$ can be viewed as the graph inverse semigroup $G(E)$, where $E$ consists of one vertex and $n$ loops. Since this graph has only one strongly connected component, the statement follows immediately from Corollary~\ref{cong-free}.
\end{proof}

By Theorem~\ref{Rees-quotients}, the quotient of a graph inverse semigroup by a Rees congruence always gives a graph inverse semigroup. However, this is not true of quotients by non-Rees congruences in general, as the next example demonstrates.

\begin{example}
Let $E$ be the following graph.
$$\xymatrix{{\bullet}^{v} \ar[r]^{e} & {\bullet}^{w}}$$
Also, let $$R = \{(v,ee^{-1}), (ee^{-1},v)\} \cup \{(\mu,\mu) \mid \mu \in G(E)\} \subseteq G(E) \times G(E).$$ Then it is easy to see that $R$ is a congruence on $G(E)$, and that $G(E)/R$ has exactly $5$ elements, three of which are idempotents (namely, $0$ and the images of $w$ and $v$). However, the only graph inverse semigroup with exactly two nonzero idempotents is the one corresponding to the graph with two vertices and no edges. Since this semigroup has three elements, it cannot be isomorphic to $G(E)/R$.
\end{example}

In contrast to the previous example, it is possible to obtain a graph inverse semigroup as the quotient of another such semigroup by a non-Rees congruence, as the next example shows.

\begin{example} \label{con-eg}
Let $E$ be the following graph.
$$\xymatrix{{\bullet}^{v} \ar@(ur,dr)^{e}}$$
Also, let $R \subseteq G(E) \times G(E)$ be the least congruence containing $(v,e)$. Since $G(E) \setminus \{0\}$ is a semigroup (the bicyclic semigroup, as usually defined), $(0,\mu) \in R$ only if $\mu = 0$. Therefore, $R$ is not a Rees congruence, by Proposition~\ref{Rees-char}. Now, $(e^{-1}, v) = (e^{-1}v,e^{-1}e) \in R$, from which it is easy to see that $(\mu, \nu) \in R$ for all $\mu, \nu \in G(E) \setminus \{0\}$. It follows that $G(E)/R \cong G(F)$, where $F$ is a graph having only one vertex and no edges.
\end{example}

\section{Idempotents} \label{idempt-sect}

An element $\mu$ of a semigroup is an \emph{idempotent} if $\mu\mu = \mu$. In this section we recall some basic facts about idempotents in inverse semigroups and record some observations about the idempotents of $G(E)$ that will be useful throughout the rest of the paper. All of the results about $G(E)$ are easy, and most have been previously observed elsewhere (e.g., \cite{Jones, JL}), but we give the proofs here for completeness.

Given an inverse semigroup $S$, the \emph{natural partial order} $\, \leq$ on $S$ is defined by $\mu \leq \nu$ ($\mu, \nu \in S$) if $\mu=\epsilon\nu$ for some idempotent $\epsilon \in S$. (See \cite[Section 5.2]{Howie} for details.) Furthermore restricting $\leq$ to the subset $I$ of $S$ consisting of all the idempotents makes $(I,\leq)$ a \emph{lower semilattice} \cite[Proposition 1.3.2]{Howie}, that is, a partially ordered set where every pair of elements has a greatest lower bound.

\begin{lemma} \label{nat-ord}
Let $E$ be a graph, let $\, \leq$ be the natural partial order on $G(E)$, and let $I$ be the subset of idempotents of $G(E)$. Then the following hold.
\begin{enumerate}
\item[$(1)$] An element $\mu \in G(E) \setminus \{0\}$ is in $I$ if and only if $\mu = xx^{-1}$ for some $x \in \pth(E)$.
\item[$(2)$] Let $u,v,x,y \in \pth (E)$ be such that $\, \ra(u)=\ra(v)$ and $\, \ra(x)=\ra(y)$. Then $uv^{-1} \leq xy^{-1}$ if and only if $u=xt$ and $v=yt$ for some $t \in \pth(E)$.
\item[$(3)$] An idempotent $\mu \in G(E)$ is maximal in $I$ with respect to $\, \leq$ if and only if $\mu \in E^0$.
\item[$(4)$] An idempotent $\mu \in G(E)$ is maximal in $I\setminus E^0$ with respect to $\, \leq$ if and only if $\mu =ee^{-1}$ for some $e\in E^1$.
\end{enumerate}
\end{lemma}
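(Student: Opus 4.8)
The plan is to prove all four parts of Lemma~\ref{nat-ord} using the characterizations of Green's relations from Lemma~\ref{orders} and the uniqueness of the normal form $xy^{-1}$ for nonzero elements, established in Section~\ref{gis-sect}. I would address the parts roughly in the stated order, since (2) is the computational engine that drives (3) and (4), and (1) is a warm-up.

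For part (1), suppose $\mu = xy^{-1} \neq 0$ is idempotent. Then $xy^{-1}xy^{-1} = xy^{-1}$, so $y^{-1}x \neq 0$, which by the CK1 relation forces $x$ and $y$ to be comparable as paths: either $x = yt$ or $y = xt$ for some $t \in \pth(E)$. Substituting back into $xy^{-1}xy^{-1} = xy^{-1}$ and using the uniqueness of normal forms pins down $t$ to be a vertex, giving $x = y$; conversely $xx^{-1}$ is manifestly idempotent since $x^{-1}x = \ra(x)$ by CK1. For part (2), I would unwind the definition of the natural partial order: $uv^{-1} \leq xy^{-1}$ means $uv^{-1} = \epsilon \, xy^{-1}$ for some idempotent $\epsilon$, and by (1) every nonzero idempotent is $zz^{-1}$. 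Computing $zz^{-1}xy^{-1}$ and demanding it equal $uv^{-1}$, then invoking uniqueness of the normal form, should yield the condition $u = xt$, $v = yt$ for a common suffix $t$. The converse is a direct check that $ut^{-1}u^{-1}$ (or an analogous idempotent) witnesses the relation.

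Parts (3) and (4) then follow by applying (2) to compare idempotents $xx^{-1}$. Here $xx^{-1} \leq yy^{-1}$ becomes, via (2), the statement that $x = yt$ and $x = yt$ for some $t$—i.e. simply that $x = yt$, so that $y$ is an initial segment (prefix) of $x$. Thus the partial order on idempotents is reverse path-extension: shorter prefixes dominate their extensions. A vertex $v = vv^{-1}$ is then maximal in $I$ precisely because no path properly contains it as a suffix-extension target in the wrong direction, giving (3). For (4), an idempotent $ee^{-1}$ with $e \in E^1$ is dominated only by the vertex $\so(e) = vv^{-1}$ and dominates the $xx^{-1}$ with $|x| \geq 1$ lying below it; maximality within $I \setminus E^0$ amounts to the observation that the only thing strictly above $ee^{-1}$ is a vertex, which has been excluded.

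I expect the main obstacle to be part (2): the forward direction requires carefully handling the product $zz^{-1}xy^{-1}$, where $z$ may overlap $x$ in either direction (one a prefix of the other, or neither), and reading off the correct common suffix $t$ from the resulting normal form without losing track of the case where the product collapses to $0$. The crucial bookkeeping is to use the uniqueness of normal forms decisively—matching first and second coordinates of $uv^{-1}$ against the computed product—rather than manipulating the relations blindly; once (2) is secured, parts (3) and (4) are short order-theoretic deductions, and (1) is essentially immediate from CK1.
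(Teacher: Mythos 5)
Your proposal is correct and follows essentially the same route as the paper: part (1) via idempotency plus uniqueness of normal forms (the paper uses the slightly slicker fact that idempotents in an inverse semigroup are self-inverse, so $xy^{-1}=yx^{-1}$), part (2) by unwinding the natural partial order with the idempotent written as $zz^{-1}$, and parts (3) and (4) as order-theoretic consequences of the resulting reverse-prefix order on idempotents. One small correction: in the converse of (2) the witnessing idempotent should be $uu^{-1}=xtt^{-1}x^{-1}$, not $ut^{-1}u^{-1}$ (which is zero unless $t$ is a closed path), though your hedge ``or an analogous idempotent'' makes this a cosmetic fix.
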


\begin{proof}
(1) If $S$ is any inverse semigroup and $\mu \in S$ is an idempotent, then $\mu\mu\mu = \mu$, and hence $\mu = \mu^{-1}$. Applying this to $G(E)$, suppose that $xy^{-1} \in G(E)$ is an idempotent ($x, y \in \pth(E)$). Then $xy^{-1} = (xy^{-1})^{-1} =yx^{-1}$, from which the desired statement follows.

\vspace{\baselineskip}

(2) Suppose that $u=xt$ and $v=yt$ for some $t \in \pth(E)$. Then $$uv^{-1} = xtt^{-1}y^{-1} = (xtt^{-1}x^{-1}) xy^{-1},$$ which implies that $uv^{-1} \leq xy^{-1}$, since $xtt^{-1}x^{-1}$ is an idempotent. 

For the converse, suppose that $uv^{-1} \leq xy^{-1}$. Then $uv^{-1} = (pp^{-1}) xy^{-1}$ for some $p \in \pth(E)$, by (1). Since $pp^{-1} xy^{-1} \neq 0$, there is some $t \in \pth(E)$ such that either $x=pt$ or $p = xt$. In the first case, $uv^{-1} = pp^{-1} xy^{-1} =  xy^{-1}$, and hence $u=xt$ and $v=yt$, where $t=\ra(x)=\ra(y)$. In the second case, $uv^{-1} = pp^{-1} xy^{-1} = xtt^{-1}y^{-1}$, and hence $u=xt$ and $v=yt$, as desired. 

\vspace{\baselineskip}

(3) Suppose that $\mu \in E^0$ and $\mu \leq \nu$ for some $\nu \in I$. Then $\nu \neq 0$, and hence, by (1) and (2), $\nu = xx^{-1}$ and $\mu=xtt^{-1}x^{-1}$ for some $x,t \in \pth(E)$. Since $\mu$ is a vertex, this can happen only if $\nu = x = t = \mu$, and hence $\mu$ is maximal. 

Conversely, suppose that $\mu \in G(E)$ is an idempotent maximal in $I$. Then $\mu \neq 0$, and hence $\mu = xx^{-1}$ for some $x \in \pth(E)$, by (1). Thus $\mu = xx^{-1} \leq \so(x)$, by (2). Since $\mu$ is maximal, this implies that $\mu = x = \so(x)$, and hence $\mu \in E^0$.

\vspace{\baselineskip}

(4) Let $e \in E^1$, and suppose that $ee^{-1} \leq \nu$ for some $\nu \in I \setminus E^0$. Then $\nu \neq 0$, and hence, by (1) and (2), $\nu = xx^{-1}$ and $ee^{-1} = xtt^{-1}x^{-1}$ for some $x,t \in \pth(E)$. Since $e \in E^1$, this implies that either $e = x$ and $t=\ra(e)$, or $e = t$ and $x=\so(e)$. In the second case, $\nu \in E^0$, contrary to assumption. Thus $e=x$, and therefore $\nu = ee^{-1}$. Hence $ee^{-1}$ is maximal in $I\setminus E^0$.

Conversely, suppose that $\mu \in G(E)$ is an idempotent maximal in $I\setminus E^0$. Then $\mu \neq 0$, and hence $\mu = xx^{-1}$ for some $x \in \pth(E)$, by (1). Since $xx^{-1} \notin E^0$, we can write $x=et$ for some $e \in E^1$ and $t \in \pth(E)$, and hence $\mu = ett^{-1}e^{-1} \leq ee^{-1}$, by (2). Since $\mu$ is maximal in $I\setminus E^0$, and $ee^{-1} \in I\setminus E^0$, this implies that $\mu = ee^{-1}$ (i.e., $t \in E^0$).
\end{proof}

Given an inverse semigroup $S$, the following relation is called the \emph{maximum idempotent-separating congruence} on $S$: $$\{(\mu, \nu) \mid \mu,\nu \in S \text{ and }\mu^{-1}\epsilon\mu = \nu^{-1}\epsilon \nu \text{ for all idempotents } \epsilon \in S\}.$$ The semigroup $S$ is \emph{fundamental} if this relation is equal to the diagonal congruence.

\begin{lemma} \label{fund}
The inverse semigroup $G(E)$ is fundamental for any graph $E$.
\end{lemma}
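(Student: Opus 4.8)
The plan is to show that the maximum idempotent-separating congruence on $G(E)$, which I will call $\rho$, coincides with the diagonal $\Delta$. The decisive input is Corollary~\ref{Green's}(4), which says that $\GH$ is trivial on $G(E)$: distinct elements are never $\GH$-related. Since every congruence contains $\Delta$, it therefore suffices to prove the single containment $\rho \subseteq \GH$; combined with $\Delta \subseteq \rho$ and the triviality of $\GH$, this forces $\rho = \Delta$, which is exactly the assertion that $G(E)$ is fundamental.

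The cleanest route is to invoke the standard structural fact that the maximum idempotent-separating congruence on any inverse semigroup is contained in $\GH$ (see~\cite[Section 5.3]{Howie}); feeding this into the previous paragraph yields $\rho = \Delta$ at once. If a more self-contained argument is preferred, I would expand that containment as follows. First, $\rho$ genuinely separates idempotents: if $\epsilon,\delta$ are idempotents with $(\epsilon,\delta)\in\rho$, then using $\epsilon$ and then $\delta$ as the test idempotent in the defining condition (and using $\epsilon^{-1}=\epsilon$, $\delta^{-1}=\delta$ together with the commutativity of idempotents in an inverse semigroup) gives $\epsilon = \epsilon\delta = \delta$. Second, since $\rho$ is a congruence on an inverse semigroup, it is compatible with inversion~\cite[Section 5.1]{Howie}, so $(\mu,\nu)\in\rho$ implies $(\mu^{-1},\nu^{-1})\in\rho$, whence $(\mu\mu^{-1},\nu\nu^{-1})\in\rho$ and $(\mu^{-1}\mu,\nu^{-1}\nu)\in\rho$. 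Each of these is a pair of idempotents, so they must be equal; that is, $\mu\mu^{-1}=\nu\nu^{-1}$ and $\mu^{-1}\mu=\nu^{-1}\nu$, which is precisely $\mu \, \GR \, \nu$ and $\mu \, \GL \, \nu$. Hence $\mu \, \GH \, \nu$, establishing $\rho \subseteq \GH$.

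The only real obstacle is the justification of $\rho \subseteq \GH$, and this rests entirely on imported inverse-semigroup theory (the idempotent-separating property of $\rho$ and the inversion-compatibility of congruences), both routine and citable. Should a fully elementary computation inside $G(E)$ be wanted instead, I would argue directly: writing $\mu = uv^{-1}$ and $\nu = xy^{-1}$ (the case $\mu = 0$ forcing $\nu = 0$ by testing $\epsilon = \nu\nu^{-1}$), substituting $\epsilon = \mu\mu^{-1}$ and then $\epsilon = \nu\nu^{-1}$ into $\mu^{-1}\epsilon\mu = \nu^{-1}\epsilon\nu$ gives $\mu^{-1}\mu \leq \nu^{-1}\nu$ and $\nu^{-1}\nu \leq \mu^{-1}\mu$ in the natural partial order, hence $\mu^{-1}\mu = \nu^{-1}\nu$ and so $v = y$ by Corollary~\ref{Green's}(1). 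One then multiplies the defining equation on the left by $v^{-1}$ and on the right by $v$ to reduce it to $u^{-1}\epsilon u = x^{-1}\epsilon x$ for all idempotents $\epsilon$, and tests $\epsilon = uu^{-1}$ and $\epsilon = xx^{-1}$ to force $u = x$, whence $\mu = \nu$. This elementary path is correct but requires splitting into the cases where one of $u,x$ is a prefix of the other, and is noticeably more tedious than the structural argument above, so I would present the structural proof as the main one.
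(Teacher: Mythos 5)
Your proposal is correct and follows essentially the same route as the paper: the paper likewise cites the standard fact (\cite[Proposition 5.3.7]{Howie}) that the maximum idempotent-separating congruence is the largest congruence contained in $\GH$, and then applies Corollary~\ref{Green's}(4) to conclude that $\GH$, and hence that congruence, is the diagonal. Your optional self-contained expansions (idempotent-separation of $\rho$ and $\rho \subseteq \GH$ via $\mu\mu^{-1}=\nu\nu^{-1}$, $\mu^{-1}\mu=\nu^{-1}\nu$) are also sound, but the main argument is the paper's.
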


\begin{proof}
It is a standard fact \cite[Proposition 5.3.7]{Howie} that in an inverse semigroup the maximum idempotent-separating congruence is the largest congruence contained in $\GH$. Now, by Corollary~\ref{Green's}(4), $\mu \, \GH \, \nu$ if and only if $\mu=\nu$, for all $\mu, \nu \in G(E)$. Thus in a graph inverse semigroup $\GH$ is precisely the diagonal congruence, and therefore so is the maximum idempotent-separating congruence, showing that $G(E)$ is fundamental.
\end{proof}

\section{Representations}\label{rep-sect}

Recall that given a nonempty set $X$, a binary relation $R \subseteq X \times X$ is a \emph{partial function} if $(x,y), (x,z) \in R$ implies that $y=z$ for all $x, y, z \in X$. It is a standard fact that the set $\GP_X$  of all partial functions on $X$ is a semigroup, under composition of relations \cite[Proposition 1.4.2]{Howie}, called the \emph{partial transformation semigroup} on $X$. Given a semigroup $S$, a semigroup homomorphism $\phi : S \to \GP_X$ is a called a \emph{representation of} $S$ \emph{by partial transformations}. If $\phi$ is injective, then it is a \emph{faithful} representation. The cardinality of $X$ is called the \emph{degree} of $\phi$.

Our next goal is to find the minimum possible degree of a faithful representation of $G(E)$ by partial transformations, when $G(E)$ is countable. If $G(E)$ is countably infinite, then it does not have a faithful representation by
partial transformations on any finite set (since there are only finitely many
such partial transformations). Hence, in this case, the minimum possible degree
of a faithful partial transformation representation of $G(E)$ is $|G(E)| = \aleph_0$. The usual Vagner-Preston representation of $G(E)$ (see \cite[Theorem 5.1.7]{Howie}) is an example of such a representation with minimum degree. 

Turning to finite graph inverse semigroups, we note that $G(E)$ is finite precisely when $E$ is finite and acyclic. To determine the minimum possible degree of a faithful representation of $G(E)$ by partial transformations we shall need the following theorem of Easdown. Before stating the result, we recall that an element $x$ of a partially ordered set $X$ is called \emph{join-irreducible} if it is not zero (i.e., the least element of $X$, when it exists), and $x=y\vee z$ implies that $x=y$ or $x=z$, for all $y,z\in X$ (where $y\vee z$ denotes the least upper bound of $y$ and $z$, if it exists).

\begin{theorem}[Theorem 7 in~\cite{Easdown}] \label{Easdown-thrm}
Let $S$ be a finite fundamental inverse semigroup. Then the minimum possible degree of a faithful representation of $S$ by partial transformations equals the number of join-irreducible idempotents in $S$.
\end{theorem}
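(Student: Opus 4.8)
The plan is to establish the two matching inequalities: that every faithful representation has degree at least $|J|$, where $J$ denotes the set of join-irreducible idempotents of $S$, and that a faithful representation of degree exactly $|J|$ exists. Write $E = E(S)$ for the semilattice of idempotents; since $S$ is finite, $E$ is a finite lower semilattice, and by adjoining a top element if necessary I regard $E$ as a sub-meet-semilattice of a finite \emph{lattice} $L$ (no element of $E$ becomes identified, and the join-irreducibles of $E$ are unaffected). I will use two standard facts about finite lattices: every element is the join of the join-irreducibles lying below it, and every join-irreducible $j$ has a greatest proper lower element $j_*$, characterized by $\{e : e < j\} \subseteq \{e : e \le j_*\}$. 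For $j \in J$ one checks $j_* \in E$.

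First I would extract a combinatorial invariant from an arbitrary representation $\phi : S \to \GP_X$. For an idempotent $e$, the partial transformation $\phi(e)$ is idempotent, hence a retraction onto its image, and I denote its fixed-point set (equivalently, its image) by $X_e$. Since idempotents commute, a short computation gives $X_{ef} = X_e \cap X_f$, so $e \mapsto X_e$ is an order-preserving meet-homomorphism $E \to (2^X,\cap)$. The crucial point is that faithfulness makes it injective: if $X_e = X_f$, then $\phi(e)$ and $\phi(f)$ are retractions onto a common set, and a direct check shows that the two products $\phi(e)\phi(f)$ and $\phi(f)\phi(e)$ equal $\phi(e)$ and $\phi(f)$ in some order; since $ef = fe$ these products coincide, forcing $\phi(e) = \phi(f)$ and hence $e = f$. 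Thus the lower bound reduces to a statement about meet-embeddings of $E$ into a power set.

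For the lower bound I would then attach to each $j \in J$ a point of $X$, injectively. Because $e \mapsto X_e$ is injective and order-preserving and $j_* < j$, we have $X_{j_*} \subsetneq X_j$, so I may choose $x_j \in X_j \setminus X_{j_*}$. If $x_j = x_{j'}$ for distinct $j,j' \in J$, then $x_j \in X_j \cap X_{j'} = X_{j \wedge j'}$, and I would split into the cases $j \wedge j' = j$, $j \wedge j' = j'$, and $j\wedge j' < j,j'$: in the first, $j < j'$ gives $j \le j'_*$ and hence $x_{j'} \in X_{j'_*}$, a contradiction; the second is symmetric; in the third, $j \wedge j' \le j_*$ gives $x_j \in X_{j_*}$, again a contradiction. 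Hence $j \mapsto x_j$ is injective and $|X| \ge |J|$. (Note that this half uses only finiteness of $E$ and faithfulness, not fundamentality.)

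The matching upper bound is the harder half, and it is where fundamentality enters. I would take $X = J$ and build the representation from the Munn (fundamental) representation $\mu : S \to I_E$ into the symmetric inverse monoid of partial bijections of $E$, given by $\mu(s) : \, \downarrow(s^{-1}s) \to \, \downarrow(ss^{-1})$, $e \mapsto ses^{-1}$, which is faithful precisely because $S$ is fundamental (Munn's theorem; see \cite{Howie, Lawson}). Each $\mu(s)$ is a lattice isomorphism between principal ideals, so it carries join-irreducibles to join-irreducibles; thus $J$ is invariant under the action, and restricting to $J$ yields a homomorphism $\psi : S \to I_J \subseteq \GP_J$ of degree $|J|$. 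The main obstacle is to show $\psi$ is still faithful, i.e.\ that recording $\mu(s)$ only on join-irreducibles loses no information. For this I would argue that $\psi(s)$ determines $\mu(s)$: the domain $J \cap\, \downarrow(s^{-1}s)$ recovers $s^{-1}s = \bigvee\bigl(J \cap\, \downarrow(s^{-1}s)\bigr)$, and likewise $ss^{-1}$, so $\psi(s)=\psi(t)$ already forces $s^{-1}s = t^{-1}t$ and $ss^{-1}=tt^{-1}$; and since $\mu(s)$ is join-preserving on $\downarrow(s^{-1}s)$, which is generated under joins by these join-irreducibles, $\mu(s)$ is completely determined by its restriction $\psi(s)$. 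Hence $\psi(s) = \psi(t)$ implies $\mu(s) = \mu(t)$, and faithfulness of $\mu$ gives $s = t$. Combining the two halves shows that the minimum possible degree equals $|J|$.
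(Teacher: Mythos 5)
Your argument is correct, but there is nothing in the paper to compare it against: the paper does not prove this statement, it imports it wholesale as Theorem~7 of Easdown~\cite{Easdown} and uses it as a black box (remarking only that Easdown's proof yields an explicit representation of minimal degree). What you have written is, in substance, a reconstruction of Easdown's own argument rather than an alternative to anything in this paper. Both halves check out. In the lower bound, the key steps are sound: an idempotent partial map is a retraction onto its fixed-point set, commuting retractions with the same fixed-point set coincide (so faithfulness makes $e \mapsto X_e$ injective and meet-preserving), and the choice $x_j \in X_j \setminus X_{j_*}$ together with your three-case analysis gives an injection of $J$ into $X$; you are also right that this half uses only faithfulness and finiteness of the idempotent semilattice, not fundamentality. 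In the upper bound, you correctly locate the role of fundamentality (faithfulness of the Munn representation $\mu$) and the reason nothing is lost on restriction to $J$: domains and ranges are recovered as joins of the join-irreducibles they contain, and an order-isomorphism of principal ideals is determined by its values on join-irreducibles. Two assertions are left as routine verifications and should be written out in a fully rigorous account: (i) join-irreducibility is a local property, i.e.\ $j \le f$ is join-irreducible in the whole semilattice if and only if it is join-irreducible in the principal ideal $\downarrow f$ (this is what justifies "$\mu(s)$ carries join-irreducibles to join-irreducibles" in the sense you need), and (ii) restricting partial bijections to the subset $J$ defines a homomorphism into $\GP_J$ precisely because $J$ is invariant under every $\mu(s)$, so that points of $J$ never leave $J$ and re-enter. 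Neither is a gap; both follow in a few lines from the facts you already establish.
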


We note that Easdown's proof of this theorem gives an explicit construction of a faithful representation having the minimum possible degree.

Next, let us describe the join-irreducible idempotents of $G(E)$.

\begin{lemma} \label{join-irred}
Let $E$ be a graph, let $\, \leq$ be the natural partial order on $G(E)$, and let $x\in \pth(E)$. Then the idempotent $xx^{-1}$ is join-irreducible in the lower semilattice of idempotents of $G(E)$ if and only if the out-degree of $\, \ra(x)$ is at most $\, 1$.
\end{lemma}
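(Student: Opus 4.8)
The plan is to characterize join-irreducibility of the idempotent $xx^{-1}$ directly in terms of the structure of the lower semilattice $(I, \leq)$ of idempotents below $xx^{-1}$, using the explicit description of the natural partial order provided by Lemma~\ref{nat-ord}(2). Set $v = \ra(x)$. By Lemma~\ref{nat-ord}(2), the idempotents strictly below $xx^{-1}$ are exactly the elements $xtt^{-1}x^{-1}$ with $t \in \pth(E)$ and $t \notin E^0$, i.e. $|t| \geq 1$; and such a $t$ must satisfy $\so(t) = \ra(x) = v$. Thus the idempotents immediately relevant to join-irreducibility correspond to paths $t$ of length $1$ starting at $v$, that is, to edges $e \in E^1$ with $\so(e) = v$. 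The number of such edges is precisely the out-degree of $v$, which is why the out-degree condition appears.

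First I would handle the forward direction by contraposition: suppose the out-degree of $v = \ra(x)$ is at least $2$, and produce a nontrivial join decomposition of $xx^{-1}$. The idea is that $xx^{-1}$ is the join of the idempotents $\{xee^{-1}x^{-1} \mid e \in E^1,\ \so(e) = v\}$, since these are the maximal proper idempotents below $xx^{-1}$ and every proper idempotent below $xx^{-1}$ lies below one of them. More carefully, I would argue that if $e, f \in E^1$ are two distinct edges with source $v$, then $xee^{-1}x^{-1} \vee xff^{-1}x^{-1} = xx^{-1}$: both lie strictly below $xx^{-1}$, and I would verify that no proper idempotent below $xx^{-1}$ can serve as an upper bound for both. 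Concretely, any common upper bound in $I$ of the two is an idempotent $\mu \leq xx^{-1}$; writing $\mu = xtt^{-1}x^{-1}$ via Lemma~\ref{nat-ord}(2), the requirement $xee^{-1}x^{-1} \leq \mu$ and $xff^{-1}x^{-1} \leq \mu$ forces $t$ to be a common prefix of both $e$ and $f$ (again by Lemma~\ref{nat-ord}(2)), and since $e \neq f$ are distinct single edges their only common prefix is the empty path $t = v$, giving $\mu = xx^{-1}$. Hence $xx^{-1}$ is the least upper bound but equals neither summand, so it is not join-irreducible.

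For the converse, suppose the out-degree of $v$ is at most $1$; I must show $xx^{-1}$ is join-irreducible. If the out-degree is $0$ (i.e. $v$ is a sink), then there are no paths $t$ with $|t| \geq 1$ and $\so(t) = v$, so $xx^{-1}$ has no proper idempotent below it at all; it is trivially join-irreducible (it cannot be written as a join of two strictly smaller idempotents, as none exist). If the out-degree is exactly $1$, say the unique edge with source $v$ is $e$, then every proper idempotent $xtt^{-1}x^{-1}$ below $xx^{-1}$ has $t$ beginning with $e$, hence $xtt^{-1}x^{-1} \leq xee^{-1}x^{-1} < xx^{-1}$. Thus $xee^{-1}x^{-1}$ is the unique maximal proper idempotent below $xx^{-1}$, and every proper idempotent below $xx^{-1}$ lies below it. Consequently, if $xx^{-1} = \alpha \vee \beta$ with $\alpha, \beta \leq xx^{-1}$, then $\alpha \neq xx^{-1}$ and $\beta \neq xx^{-1}$ would force both $\alpha, \beta \leq xee^{-1}x^{-1}$, whence $\alpha \vee \beta \leq xee^{-1}x^{-1} < xx^{-1}$, a contradiction; so $\alpha = xx^{-1}$ or $\beta = xx^{-1}$, giving join-irreducibility.

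The main obstacle I anticipate is the care needed in the out-degree-$\geq 2$ direction to confirm that $xx^{-1}$ genuinely is the \emph{least} upper bound of $xee^{-1}x^{-1}$ and $xff^{-1}x^{-1}$ (not merely an upper bound), which amounts to the ``common prefix'' argument above: I must be sure that the greatest lower bound / least upper bound computations in the semilattice correspond exactly to the prefix combinatorics of paths, and that distinctness of the edges $e, f$ forces the trivial common prefix. Everything else reduces to routine applications of Lemma~\ref{nat-ord}(1,2), so the combinatorial prefix argument is where the real content lies.
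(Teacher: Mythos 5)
Your overall strategy is the same as the paper's: for out-degree at least $2$, exhibit $xx^{-1}$ as the join $xee^{-1}x^{-1}\vee xff^{-1}x^{-1}$ for distinct edges $e,f$ with source $\ra(x)$; for out-degree $0$, note that only $0$ lies strictly below $xx^{-1}$; and for out-degree $1$, note that every proper idempotent below $xx^{-1}$ lies below the single element $xee^{-1}x^{-1}<xx^{-1}$, so no two elements other than $xx^{-1}$ itself can join to $xx^{-1}$. Your converse half is correct and is essentially the paper's argument.

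There is, however, a flaw in your forward direction, in the verification that $xx^{-1}$ is genuinely the \emph{least} upper bound. Your assertion that ``any common upper bound in $I$ of the two is an idempotent $\mu\leq xx^{-1}$'' is false: for example, $\so(x)$, and more generally $ww^{-1}$ for any proper prefix $w$ of $x$, is a common upper bound of $xee^{-1}x^{-1}$ and $xff^{-1}x^{-1}$ that is strictly \emph{above} $xx^{-1}$, not below it (whenever $x\notin E^0$). What your argument actually establishes is only that no common upper bound lies strictly below $xx^{-1}$; in a general poset that does not make $xx^{-1}$ the least upper bound (there could be incomparable upper bounds and no join at all), so the equality $xx^{-1}=xee^{-1}x^{-1}\vee xff^{-1}x^{-1}$ is not yet proved as written. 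The repair is immediate and uses exactly your prefix combinatorics, applied to an \emph{arbitrary} upper bound: if $ww^{-1}$ is any idempotent with $xee^{-1}x^{-1}\leq ww^{-1}$ and $xff^{-1}x^{-1}\leq ww^{-1}$, then Lemma~\ref{nat-ord}(2) says $w$ is a common prefix of the paths $xe$ and $xf$; since $e\neq f$, this forces $w$ to be a prefix of $x$, say $x=wu$, and then $xx^{-1}=w(uu^{-1})w^{-1}\leq ww^{-1}$ by Lemma~\ref{nat-ord}(2) again. (Alternatively, meet an arbitrary upper bound with $xx^{-1}$ --- the idempotents form a lower semilattice --- and then apply your restricted argument to the meet.) With this correction your proof is complete and coincides with the paper's, which handles this same point simply by citing Lemma~\ref{nat-ord}(2) without spelling out the verification.
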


\begin{proof}
Suppose that the out-degree of $\ra(x)$ is at least $2$. Then there are $e,f\in E^1$ such that $e\not=f$ and $\so(e)=\so(f)=\ra(x)$. Hence $xx^{-1}=xee^{-1}x^{-1}\vee xff^{-1}x^{-1}$, by Lemma~\ref{nat-ord}(2), and so $xx^{-1}$ is not join-irreducible. 

For the converse, suppose that the out-degree of $\ra(x)$ is at most $1$. If the out-degree of $\ra(x)$ is $0$, then, by Lemma~\ref{nat-ord}(2), the only idempotent $\tau$ such that $\tau < xx^{-1}$ is $\tau = 0$. Therefore $xx^{-1}$ is clearly join-irreducible in this case. Let us therefore assume that out-degree of $\ra(x)$ is $1$, and that $xx^{-1}=\mu\vee\nu$ for some idempotents $\mu,\nu\in G(E)$. If $\mu=0$ or $\nu=0$, then $xx^{-1}=\nu$ or $xx^{-1}=\mu$, respectively. Hence we may also assume that $\mu=yy^{-1}$ and $\nu=zz^{-1}$ for some distinct $y,z\in \pth(E)$, where, without loss of generality, $\mu \neq xx^{-1}$. Then, by Lemma~\ref{nat-ord}(2), $y=xeu$ for some $u\in \pth(E)$, where $e \in E^1$ is the unique edge satisfying $\so(e) = \ra(x)$. If $z\not=x$, then, similarly, $z=xev$ for some $v\in \pth(E)$. But then $yy^{-1}\vee zz^{-1}=xee^{-1}x^{-1} \neq xx^{-1}$, contradicting $xx^{-1}=\mu\vee\nu$. Thus $z=x$, and so $xx^{-1}$ is join-irreducible. 
\end{proof}

\begin{proposition} \label{repres}
Let $G(E)$ be a finite graph inverse semigroup. Then the minimum possible degree of a faithful representation of $G(E)$ by partial transformations is the number of paths $x \in \pth(E)$ such that the out-degree of $\, \ra(x)$ is at most $\, 1$.
\end{proposition}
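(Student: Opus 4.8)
The plan is to apply Easdown's theorem (Theorem~\ref{Easdown-thrm}) directly, reducing the problem to counting join-irreducible idempotents, and then to translate that count into a count of paths via the earlier lemmas on idempotents. First I would verify that the hypotheses of Theorem~\ref{Easdown-thrm} are met: $G(E)$ is finite by assumption, and $G(E)$ is fundamental for every graph $E$ by Lemma~\ref{fund}. Hence the theorem applies and tells us that the minimum possible degree of a faithful representation of $G(E)$ by partial transformations equals the number of join-irreducible idempotents in the lower semilattice of idempotents of $G(E)$.

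Next I would identify these join-irreducible idempotents explicitly. By Lemma~\ref{nat-ord}(1), every nonzero idempotent of $G(E)$ has the form $xx^{-1}$ for some $x \in \pth(E)$, and by Lemma~\ref{join-irred} such an idempotent is join-irreducible precisely when the out-degree of $\ra(x)$ is at most $1$. It therefore remains only to count the idempotents $xx^{-1}$ with this property.

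The final step is to observe that this count coincides with the count of paths appearing in the statement. The map $x \mapsto xx^{-1}$ from $\pth(E)$ to the set of nonzero idempotents of $G(E)$ is injective: if $xx^{-1} = x'x'^{-1}$ with $x, x' \in \pth(E)$, then the uniqueness of the representation $uv^{-1}$ for nonzero elements of $G(E)$ (recalled in Section~\ref{gis-sect}) forces $x = x'$. Thus distinct paths yield distinct join-irreducible idempotents, so the number of join-irreducible idempotents equals the number of paths $x \in \pth(E)$ for which the out-degree of $\ra(x)$ is at most $1$. Combining this with Easdown's theorem gives the proposition.

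As for difficulties, the argument is essentially bookkeeping once the earlier results are in hand, so no step presents a genuine obstacle. The only point requiring care is the injectivity of $x \mapsto xx^{-1}$, which is what guarantees that counting paths (rather than counting idempotents abstractly) produces the correct answer; this injectivity is immediate from the uniqueness of normal forms, so the proof should be short.
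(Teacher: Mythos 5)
Your proof is correct and follows the same route as the paper: apply Theorem~\ref{Easdown-thrm} (justified by Lemma~\ref{fund}), then count join-irreducible idempotents via Lemma~\ref{nat-ord}(1) and Lemma~\ref{join-irred}. Your explicit remark that $x \mapsto xx^{-1}$ is injective (by uniqueness of normal forms) is a detail the paper leaves implicit, but it is the same argument.
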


\begin{proof}
Since, by Lemma~\ref{fund}, $G(E)$ is fundamental, we can apply Theorem~\ref{Easdown-thrm} to it. The proposition now follows from Lemma~\ref{join-irred}, since, by Lemma~\ref{nat-ord}(1), all nonzero idempotents of $G(E)$ are of the form $xx^{-1}$, for some $x \in \pth(E)$.
\end{proof}

\section{Homomorphisms}

Next, we describe when a homomorphism of graphs can be extended to a homomorphism of the corresponding graph inverse semigroups. 

\begin{theorem} \label{graph-iso}
Let $E_a$ and $E_b$ be two graphs, and suppose that $\phi_0 : E_a^0 \to E_b^0$ and $\phi_1 : E_a^1 \to E_b^1$ are functions such that $\phi = (\phi_0, \phi_1)$ is a graph homomorphism from $E_a$ to $E_b$. Then the following are equivalent:
\begin{enumerate}
\item[$(1)$] $\phi$ can be extended to a semigroup homomorphism $\varphi : G(E_a) \to G(E_b)$ that takes zero to zero,
\item[$(2)$] $\phi_0$ and $\phi_1$ are injective.
\end{enumerate}
If these conditions hold, then $\varphi$ is uniquely determined and injective. Moreover, $\varphi$ is surjective if and only if $\phi_0$ and $\phi_1$ are surjective.
\end{theorem}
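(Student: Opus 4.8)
The plan is to prove the two implications separately, folding uniqueness and injectivity into the construction and handling surjectivity at the end. Throughout I will use the normal form: every nonzero element of a graph inverse semigroup is uniquely $xy^{-1}$, and a semigroup homomorphism between inverse semigroups automatically preserves inverses, since $\varphi(x^{-1})$ is an inverse of $\varphi(x)$ and inverses are unique.

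For $(1)\Rightarrow(2)$ I would argue contrapositively by testing the relations of $G(E_b)$ on small elements. Because inverses are preserved, $\varphi(e^{-1})=\phi_1(e)^{-1}$ is forced, so $\varphi$ is determined on $E_a^0\cup E_a^1\cup\{e^{-1}\mid e\in E_a^1\}$, which generates $G(E_a)$; this already gives uniqueness. Now if $\phi_0(v)=\phi_0(w)$ for distinct $v,w\in E_a^0$, then $vw=0$ in $G(E_a)$ by relation (V), yet $\varphi(vw)=\phi_0(v)\phi_0(w)=\phi_0(v)\neq 0$, contradicting $\varphi(0)=0$. Likewise, if $\phi_1(e)=\phi_1(f)$ for distinct edges then $e^{-1}f=0$ by (CK1), while $\varphi(e^{-1}f)=\phi_1(e)^{-1}\phi_1(f)=\ra_b(\phi_1(e))\neq 0$. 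Hence both $\phi_0$ and $\phi_1$ are injective.

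For $(2)\Rightarrow(1)$ I would first extend $\phi$ to $\bar\phi\colon\pth(E_a)\to\pth(E_b)$ by $\bar\phi(v)=\phi_0(v)$ and $\bar\phi(e_1\cdots e_n)=\phi_1(e_1)\cdots\phi_1(e_n)$; the graph-homomorphism condition makes the right side a genuine path, and one checks that $\bar\phi$ is length-preserving, multiplicative on concatenations, and satisfies $\so_b(\bar\phi(x))=\phi_0(\so_a(x))$ and $\ra_b(\bar\phi(x))=\phi_0(\ra_a(x))$. Then I set $\varphi(xy^{-1})=\bar\phi(x)\bar\phi(y)^{-1}$ and $\varphi(0)=0$; the condition $\ra_a(x)=\ra_a(y)$ forces $\ra_b(\bar\phi(x))=\ra_b(\bar\phi(y))$, so this is a well-defined nonzero element, and a short check on vertices and edges shows $\varphi$ restricts to $\phi$. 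The heart of the matter is that $\varphi$ is multiplicative. Writing $\mu=xy^{-1}$, $\nu=zw^{-1}$, the product $\mu\nu$ in $G(E_a)$ is governed by whether $y$ is a prefix of $z$, $z$ is a prefix of $y$, or neither (in which case $\mu\nu=0$). The key lemma I would isolate is that $\bar\phi$ \emph{preserves and reflects} the prefix relation: $\bar\phi(y)$ is a prefix of $\bar\phi(z)$ if and only if $y$ is a prefix of $z$. Preservation is immediate from multiplicativity; reflection is exactly where injectivity of $\phi_0,\phi_1$ enters, since matching $\bar\phi(y)$ with an initial segment of $\bar\phi(z)$ lets one cancel $\phi_1$ edge-by-edge (and $\phi_0$ on the source) to recover that $y$ is an initial segment of $z$. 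I expect this reflection step to be the main obstacle: it is the only place injectivity is genuinely needed, and it is precisely what guarantees that $\varphi$ sends a product to $0$ whenever it should. With the lemma in hand, each of the three cases reduces, after collapsing the telescoping factor $\bar\phi(y)^{-1}\bar\phi(y)=\ra_b(\bar\phi(y))$, to the corresponding case in $G(E_b)$; in the ``neither'' case one gets $\varphi(\mu)\varphi(\nu)=\bar\phi(x)\bar\phi(y)^{-1}\bar\phi(z)\bar\phi(w)^{-1}=0=\varphi(\mu\nu)$.

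Finally, injectivity of $\varphi$ follows from injectivity of $\bar\phi$ (again from injectivity of $\phi_0,\phi_1$, together with length-preservation) and uniqueness of the normal form: $\bar\phi(x)\bar\phi(y)^{-1}=\bar\phi(x')\bar\phi(y')^{-1}$ forces $\bar\phi(x)=\bar\phi(x')$ and $\bar\phi(y)=\bar\phi(y')$, hence $x=x'$ and $y=y'$, while nonzero elements never map to $0$. For the surjectivity equivalence, assuming $\phi_0,\phi_1$ injective: if they are also surjective, I would lift an arbitrary path of $E_b$ edge-by-edge, using injectivity of $\phi_0$ to verify the consecutive source/range conditions lift, so $\bar\phi$ and hence $\varphi$ are onto; conversely, given $\varphi$ onto, I would pull back a vertex $w\in E_b^0$ and an edge $g\in E_b^1$, and observe via normal-form uniqueness and length-preservation that any preimage of $w$ (resp.\ $g$) must itself be a vertex (resp.\ a single edge), thereby exhibiting $w\in\mathrm{image}(\phi_0)$ and $g\in\mathrm{image}(\phi_1)$.
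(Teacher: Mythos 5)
Your proposal is correct and follows essentially the same route as the paper's proof: the same contrapositive tests ($vw=0$ and $e^{-1}f=0$) for $(1)\Rightarrow(2)$, the same explicit definition of $\varphi$ on normal forms $xy^{-1}$ for $(2)\Rightarrow(1)$, and the same case analysis for multiplicativity, with your prefix-reflection lemma being just a packaged form of the paper's inline edge-by-edge comparison (finding the first index where $e_l\neq f_l$ and invoking injectivity of $\phi_1$), and with the telescoping computation for the nonzero case identical. Your treatment of injectivity and of the surjectivity equivalence is likewise the same argument, merely spelled out in slightly more detail than the paper's terse appeal to generation by $E^0_a\cup E^1_a\cup\{0\}$.
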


\begin{proof}
Suppose that (1) holds. If $\phi_0$ is not injective, then there exist distinct $v, w \in E_a^0$ such that $\phi_0(v)=\phi_0(w)$. Hence $$0 = \varphi(0) = \varphi(vw) = \varphi(v)\varphi(w) = \phi_0(v)\phi_0(w) = \phi_0(v)\phi_0(v),$$ which is impossible, since $\phi_0(v) \in E_b^0$. Thus $\phi_0$ must be injective.

If $\phi_1$ is not injective, then there exist distinct $e,f \in E_a^1$ such that $\phi_1(e)=\phi_1(f)$. Since $\varphi$ is a homomorphism of inverse semigroups, $\varphi(\mu^{-1}) = \varphi(\mu)^{-1}$ for all $\mu \in G(E_a)$, and hence $$0 = \varphi(0) =  \varphi(f^{-1}e) = \varphi(f)^{-1}\varphi(e) = \phi_1(f)^{-1}\phi_1(e) = \phi_1(e)^{-1}\phi_1(e).$$ This is impossible, since $\phi_1(e) \in E_b^1$. Thus $\phi_1$ must be injective, showing that (2) holds.

Conversely, suppose that (2) holds. As noted in Section~\ref{graphs-sect}, any nonzero element $\mu \in G(E_a)$ can be written uniquely in the form $\mu = ve_1\dots e_n f_m^{-1}\dots f_1^{-1}w$ for some $v, w \in E_a^0$, $e_1, \dots, e_n, f_1, \dots, f_m \in E_a^1$, and $m,n \in \N$ (with $n=0$ signifying that the ``path" part of $\mu$ is just the vertex $v$, and analogously for $m$). Thus we can define $\varphi : G(E_a) \to G(E_b)$ by $$\varphi (ve_1\dots e_n f_m^{-1}\dots f_1^{-1}w) = \phi_0(v)\phi_1(e_1)\dots \phi_1(e_n) \phi_1(f_m)^{-1}\dots \phi_1(f_1)^{-1}\phi_0(w),$$ and $\varphi(0) = 0$.

To show that $\varphi$ is a semigroup homomorphism, let $\mu, \nu \in G(E_a)$. If either $\mu = 0$ or $\nu = 0$, then clearly $\varphi(\mu)\varphi(\nu) = 0 = \varphi(\mu\nu)$. Let us therefore assume that $\mu \neq 0$ and $\nu \neq 0$.

Suppose that $\mu\nu = 0$, and write $\mu = sf_m^{-1}\dots f_1^{-1}v$, $\nu = we_1\dots e_ny^{-1}$ ($s,y \in \pth(E_a)$, $v, w \in E_a^0$, $e_1, \dots, e_n, f_1, \dots, f_m \in E_a^1$, and $m,n \in \N$). Then either $v \neq w$, or $v=w$, $e_1 = f_1, \dots, e_{l-1}=f_{l-1}$, but $e_l \neq f_l$ for some $l \geq 1$. In the first case, $\phi_0(v)\phi_0(w) = 0$, by the injectivity of $\phi_0$, and hence $\varphi(\mu\nu) = 0 = \varphi(\mu)\varphi(\nu)$. In the second case $$\varphi(\mu)\varphi(\nu) = \phi_0(\so(s))\dots \phi_1(f_m)^{-1}\dots \phi_1(f_l)^{-1}\phi_1(e_l)\dots \phi_1(e_n) \dots \phi_0(\so(y)) = 0,$$ since $\phi_1(f_{l}) \neq \phi_1(e_{l})$, by the injectivity of $\phi_1$. Thus $\varphi(\mu)\varphi(\nu) = 0 = \varphi(\mu\nu)$. 

Let us therefore assume that $\mu\nu \neq 0$, and write $\mu = st^{-1}$ and $\nu = xy^{-1}$ ($s,t,x,y \in \pth(E_a)$). Then there is some $p \in \pth(E_a)$ such that either $t=xp$ or $x=tp$. Let us assume that $t=xp$, since the other case is similar. Then, using the definition of $\varphi$ and the fact that $\phi$ is a graph homomorphism, we see that $$\varphi (st^{-1}) = \varphi (s)\varphi (xp)^{-1} = \varphi (s) (\varphi (x) \varphi (p))^{-1} = \varphi (s) \varphi (p)^{-1} \varphi (x)^{-1}.$$ Hence $$\varphi(\mu)\varphi(\nu) = \varphi(s)\varphi(p)^{-1}\varphi(x)^{-1}\varphi(x)\varphi(y)^{-1} = \varphi(s)\varphi(p)^{-1}\varphi(y)^{-1} = \varphi(sp^{-1}y^{-1}) = \varphi(\mu\nu),$$ showing that $\varphi$ is a homomorphism, whose restrictions to $E_a^0$ and $E_a^1$ are $\phi_0$ and $\phi_1$, respectively. That is, (1) holds. 

Next, we note that $\varphi$ is uniquely determined, since $E^0_a \cup E^1_a \cup \{0\}$ is a generating set for $G(E_a)$ as an inverse semigroup, and hence the value of any homomorphism to another inverse semigroup is determined by its values on this set. Also, it follows immediately from the definition of $\varphi$ and the injectivity of $\phi_0$ and $\phi_1$ that $\varphi$ is injective. The final claim follows from the fact that the inverse subsemigroup of $G(E_b)$ generated by $\phi_0 (E^0_a) \cup \phi_1(E^1_a) \cup \{0\}$ is $\varphi (G(E_a))$.
\end{proof}

The next example shows that in the previous theorem it is necessary to assume that $\varphi$ preserves zero for (1) to be equivalent to (2).

\begin{example}
Consider the following two graphs.
$$E_a = \xymatrix{{\bullet}^{v_1} \ \ {\bullet}^{v_2}} \ \ \ \ \ \ \ \ \ \ \ \ \ \ E_b = \xymatrix{{\bullet}^{w}}$$
Define $\phi_0 : E_a^0 \to E_b^0$ by $\phi_0(v_1) = w = \phi_0(v_2)$, and let $\phi_1 : E_a^1 \to E_b^1$ be the empty function. Then $\phi = (\phi_0,\phi_1)$ defines a graph homomorphism from $E_a $ to $E_b$, where $\phi_0$ is clearly not injective. However, $\phi$ can be extended to the semigroup homomorphism $\varphi: G(E_a) \to G(E_b)$ that takes all elements of $G(E_a)$ (including $0$) to $w$.
\end{example}

To complement Theorem~\ref{graph-iso}, next we show that an isomorphism of graph inverse semigroups always restricts to an isomorphism of the underlying graphs. In the case where the graphs are finite, this follows from a result \cite[Corollary 3.2]{Krieger} of Krieger.

\begin{proposition} \label{isomorphisms}
Let $E_a$ and $E_b$ be two graphs, and let $\varphi : G(E_a) \to G(E_b)$ be a semigroup isomorphism. Then letting $\phi_0$ and $\phi_1$ be the restrictions of $\varphi$ to $E_a^0$ and $E_a^1$, respectively, gives a graph isomorphism $\phi = (\phi_0, \phi_1)$ from $E_a$ to $E_b$.
\end{proposition}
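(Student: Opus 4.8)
The plan is to recover the vertices and edges of $E_a$ and $E_b$ purely from the internal structure of the two semigroups, using the idempotent descriptions in Lemma~\ref{nat-ord}, and then to check that $\varphi$ respects them. First I would record what every semigroup isomorphism $\varphi$ between these inverse semigroups automatically preserves. It preserves the zero (the unique absorbing element), idempotents (characterized by $\mu\mu = \mu$), and inverses (since $\mu^{-1}$ is the unique solution of the defining equations, so $\varphi(\mu^{-1}) = \varphi(\mu)^{-1}$). It also preserves the natural partial order, because $\mu \leq \nu$ means $\mu = \epsilon\nu$ for some idempotent $\epsilon$. Consequently $\varphi$ restricts to an order-isomorphism of the lower semilattice of idempotents of $G(E_a)$ onto that of $G(E_b)$, carrying maximal elements to maximal elements.

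By Lemma~\ref{nat-ord}(3) the maximal idempotents of $G(E)$ are exactly the vertices in $E^0$, so $\varphi$ restricts to a bijection $\phi_0 : E_a^0 \to E_b^0$. Similarly, by Lemma~\ref{nat-ord}(4) the idempotents that are maximal in $I \setminus E^0$ are exactly the elements $ee^{-1}$ with $e \in E^1$; since $\varphi$ already sends vertices to vertices, it carries these maximal non-vertex idempotents bijectively to their counterparts in $G(E_b)$. As $e \mapsto ee^{-1}$ is itself a bijection of $E^1$ onto this set (by uniqueness of the normal form), this produces a bijection $E_a^1 \to E_b^1$, $e \mapsto f_e$, determined by $\varphi(ee^{-1}) = f_e f_e^{-1}$.

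The main obstacle is that the previous paragraph only controls $\varphi(ee^{-1})$, whereas I need $\varphi(e) = f_e$, i.e.\ that $\varphi$ actually sends an edge to an edge. To establish this I would write $\varphi(e) = xy^{-1}$ in normal form. From $\varphi(e)\varphi(e)^{-1} = \varphi(ee^{-1}) = f_e f_e^{-1}$ together with the computation $xy^{-1}(xy^{-1})^{-1} = xx^{-1}$ I obtain $xx^{-1} = f_e f_e^{-1}$, whence $x = f_e$ by uniqueness of the normal form. On the other hand $\varphi(e)^{-1}\varphi(e) = \varphi(e^{-1}e) = \varphi(\ra_a(e))$ is the image of a vertex, hence a vertex; but $\varphi(e)^{-1}\varphi(e) = yy^{-1}$, so $yy^{-1} \in E_b^0$, which forces $y \in E_b^0$ and then $y = \ra(f_e)$. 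Therefore $\varphi(e) = f_e \ra(f_e) = f_e$, so $\phi_1 := \varphi|_{E_a^1}$ is a bijection of $E_a^1$ onto $E_b^1$.

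It remains to verify that $\phi = (\phi_0,\phi_1)$ is a graph homomorphism. Here I would simply apply $\varphi$ to the relations $\so_a(e)\,e = e$ and $e\,\ra_a(e) = e$. Since $\phi_0(\so_a(e))$ and $\phi_0(\ra_a(e))$ are vertices while $\phi_1(e)$ is an edge, the resulting identities $\phi_0(\so_a(e))\phi_1(e) = \phi_1(e)$ and $\phi_1(e)\phi_0(\ra_a(e)) = \phi_1(e)$ force $\phi_0(\so_a(e)) = \so_b(\phi_1(e))$ and $\phi_0(\ra_a(e)) = \ra_b(\phi_1(e))$. As $\phi_0$ and $\phi_1$ are bijections, $\phi$ is a graph isomorphism from $E_a$ to $E_b$, completing the argument.
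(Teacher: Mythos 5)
Your proposal is correct and follows essentially the same route as the paper's proof: both use Lemma~\ref{nat-ord}(3) and (4) to identify vertices and the idempotents $ee^{-1}$ via maximality in the idempotent semilattice, both pin down $\varphi(e)$ by computing $\varphi(e)\varphi(e)^{-1} = xx^{-1}$ and $\varphi(e^{-1}e) = yy^{-1}$ to force $x = f_e$ and $y \in E_b^0$, and both recover source and range preservation by applying $\varphi$ to the relations (E1) and using nonvanishing of the product. The only difference is cosmetic (the paper combines the two relations into the single identity $\varphi(e) = \varphi(\so_a(e))\varphi(e)\varphi(\ra_a(e)) \neq 0$).
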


\begin{proof}
Let $I_a \subseteq G(E_a)$ and $I_b \subseteq G(E_b)$ denote the respective subsets of idempotents. Also let $\leq_a^I$ and $ \leq_b^I$ denote the restrictions to $I_a$ and $I_b$, respectively, of the natural partial orders on $ G(E_a)$ and $G(E_b)$, respectively. (As mentioned in Section~\ref{idempt-sect}, $(I_a, \leq_a^I)$ and $(I_b, \leq_b^I)$ are lower semilattices.) By Lemma~\ref{nat-ord}(3), every vertex in $E_a^0$, but no other element of $G(E_a)$, is maximal in $I_a$ with respect to $\leq_a^I$, and analogously for $G(E_b)$. Since any isomorphism of semigroups induces an order-isomorphism of the corresponding idempotent semilattices, $\varphi$ must take $E_a^0$ bijectively to $E_b^0$.

Next, by Lemma~\ref{nat-ord}(4), every element of the form $ee^{-1}$ ($e \in E_a^1$), but no other element of $G(E_a)$, is maximal in $I_a \setminus E_a^0$ with respect to $\leq_a^I$, and analogously for $G(E_b)$. Hence $\varphi$ must take $\{ee^{-1} \mid e \in E_a^1\}$ bijectively to $\{ff^{-1} \mid f \in E_b^1\}$. Now, let $e \in E_a^1$ be any edge, write $\varphi(ee^{-1}) = ff^{-1}$ for some $f \in E_b^1$, and write $\varphi(e) = xy^{-1}$ for some $x,y \in \pth(E_b)$. Then $$ff^{-1} = \varphi(ee^{-1}) = \varphi(e)\varphi(e)^{-1} = xy^{-1}yx^{-1} = xx^{-1},$$ since $\varphi$ is an isomorphism of inverse semigroups. It follows that $x = f$. Furthermore, $$\varphi(\ra_a(e)) = \varphi(e^{-1}e) = yf^{-1}fy^{-1} = yy^{-1},$$ which implies that $y \in E_b^0$, since $\varphi (E_a^0) = E_b^0$. Therefore $\varphi(e) = f$, and hence $\varphi (E_a^1) \subseteq E_b^1$. Since $\varphi$ takes $\{ee^{-1} \mid e \in E_a^1\}$ bijectively to $\{ff^{-1} \mid f \in E_b^1\}$, it follows that $\varphi$ takes $E_a^1$ bijectively to $E_b^1$. Moreover, since $$0 \neq \varphi(e) = \varphi(\so_a(e)e\ra_a(e)) = \varphi(\so_a(e))\varphi(e)\varphi(\ra_a(e))$$ for any $e \in E_a^1$, we conclude that $\varphi(\so_a(e)) = \so_b(\varphi(e))$ and $\varphi(\ra_a(e)) = \ra_b(\varphi(e))$. Therefore, letting $\phi_0$ and $\phi_1$ be the restrictions of $\varphi$ to $E_a^0$ and $E_a^1$, respectively, gives a graph isomorphism $\phi = (\phi_0, \phi_1)$ from $E_a$ to $E_b$.
\end{proof}

While, by the above result, any isomorphism of graph inverse semigroups induces an isomorphism of the corresponding graphs, it is not the case in general that a homomorphism of graph inverse semigroups induces a homomorphism of the corresponding graphs, even when the homomorphism is injective or surjective, as the next two examples show. 

\begin{example}
Consider the following two graphs.
$$E_a = \xymatrix{{\bullet}^{w}} \ \ \ \ \ \ \ \ \ \ \ \ \ \ E_b = \xymatrix{{\bullet}^{v_1} \ar [r] ^{e} & {\bullet}^{v_2}}$$
Then $G(E_a) = \{0,w\}$, and thus it is easy to see that $\varphi(0) = 0$, $\varphi(w) = ee^{-1}$ defines an injective semigroup homomorphism $\varphi : G(E_a) \to G(E_b)$. However, the restriction of $\varphi$ to $E_a^0 = \{w\}$ is not a function $E_a^0 \to E_b^0$, and in particular, $\varphi$ does not induce a graph homomorphism from $E_a$ to $E_b$.
\end{example}

\begin{example}
Consider the following two graphs.
$$E_a = \xymatrix{{\bullet}^{v}\ar@(ur,dr)^{e}} \ \ \ \ \ \ \ \ \ \ \ \ \ \ E_b = \xymatrix{{\bullet}^{w}}$$
Then it is easy to see that $$\varphi(\mu) = 
\left\{ \begin{array}{ll}
w & \text{if } \mu \neq 0\\
0 & \text{if } \mu = 0
\end{array}\right.$$ 
defines a surjective semigroup homomorphism $\varphi : G(E_a) \to G(E_b)$ (cf.\ Example~\ref{con-eg}). However, the restriction of $\varphi$ to $E_a^1 = \{e\}$ is not a function $E_a^0 \to E_b^0 = \emptyset$, and in particular, $\varphi$ does not induce a graph homomorphism from $E_a$ to $E_b$.
\end{example}

From Theorem~\ref{graph-iso} and Proposition~\ref{isomorphisms} we immediately obtain the following well-known result. (In the case of simple graphs it is noted by Ash and Hall in~\cite{AH} after Theorem 1, and in the case of finite graphs it is proved by Krieger in \cite[Corollary 3.2]{Krieger}. It also follows from the result \cite[Corollary 8.5]{CS} of Costa and Steinberg that two graph inverse semigroups are Morita equivalent if and only if the underlying graphs are isomorphic.)

\begin{corollary} \label{cor-iso}
Let $E_a$ and $E_b$ be two graphs. Then $E_a \cong E_b$ if and only if $G(E_a) \cong G(E_b)$. 
\end{corollary}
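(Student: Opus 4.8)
The plan is to deduce Corollary~\ref{cor-iso} directly from the two results immediately preceding it, namely Theorem~\ref{graph-iso} and Proposition~\ref{isomorphisms}, without any new computation. The statement is an ``if and only if'' between $E_a \cong E_b$ and $G(E_a) \cong G(E_b)$, so I would prove the two implications separately, in each case by invoking the appropriate one of the two cited results.

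For the forward direction, I would assume $E_a \cong E_b$ and let $\phi = (\phi_0, \phi_1)$ be a graph isomorphism. By definition $\phi_0$ and $\phi_1$ are bijective, hence in particular injective, so condition~(2) of Theorem~\ref{graph-iso} holds. That theorem then guarantees that $\phi$ extends to a semigroup homomorphism $\varphi : G(E_a) \to G(E_b)$ which is automatically injective, and moreover the ``surjective'' clause of the theorem says $\varphi$ is surjective precisely because $\phi_0$ and $\phi_1$ are surjective. Thus $\varphi$ is a semigroup isomorphism, giving $G(E_a) \cong G(E_b)$.

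For the reverse direction, I would assume $G(E_a) \cong G(E_b)$ and let $\varphi$ be a semigroup isomorphism between them. Proposition~\ref{isomorphisms} applies verbatim: it states that restricting such a $\varphi$ to $E_a^0$ and $E_a^1$ yields a graph isomorphism from $E_a$ to $E_b$. Hence $E_a \cong E_b$, completing the equivalence.

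Since both halves are essentially one-line appeals to already-established theorems, I do not anticipate any genuine obstacle; the only point requiring care is making sure the forward direction uses the full strength of Theorem~\ref{graph-iso}, including its final ``moreover'' clause about surjectivity, rather than merely its equivalence of (1) and (2) --- one must explicitly note that bijectivity of $\phi_0,\phi_1$ forces $\varphi$ to be both injective and surjective, and therefore an isomorphism of semigroups.
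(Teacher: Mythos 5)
Your proof is correct and matches the paper's intent exactly: the paper states the corollary follows ``immediately'' from Theorem~\ref{graph-iso} and Proposition~\ref{isomorphisms}, and your two-directional argument (graph isomorphism extends to a bijective semigroup homomorphism by the theorem; semigroup isomorphism restricts to a graph isomorphism by the proposition) is precisely that deduction, spelled out. Your attention to the ``moreover'' clause on surjectivity is the right detail to make the forward direction airtight.
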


We conclude with several other consequences of Theorem~\ref{graph-iso} and Proposition~\ref{isomorphisms}.

\begin{corollary} \label{cor-autos}
Let $E$ be a graph. Denote by $\, \Aut(E)$ and $\, \Aut(G(E))$ the groups of automorphisms of $E$ as a graph and $G(E)$ as a semigroup, respectively. Then $\, \Aut(G(E)) \cong \Aut(E)$ as groups.
\end{corollary}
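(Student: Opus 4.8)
The plan is to construct a group isomorphism $\Phi : \Aut(E) \to \Aut(G(E))$ by sending each graph automorphism to its extension to the semigroup. Given $\phi = (\phi_0, \phi_1) \in \Aut(E)$, both $\phi_0$ and $\phi_1$ are bijective, hence in particular injective, so Theorem~\ref{graph-iso} applies and yields a unique semigroup homomorphism $\varphi : G(E) \to G(E)$ extending $\phi$ and preserving zero. Since $\phi_0$ and $\phi_1$ are surjective, the ``moreover'' clause of Theorem~\ref{graph-iso} tells us $\varphi$ is surjective, and the same theorem already guarantees $\varphi$ is injective; thus $\varphi \in \Aut(G(E))$. I would define $\Phi(\phi) = \varphi$.

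Next I would check that $\Phi$ is a group homomorphism. The cleanest way is to note that for $\phi, \psi \in \Aut(E)$, the composite $\Phi(\phi)\circ\Phi(\psi)$ is a zero-preserving semigroup endomorphism of $G(E)$ whose restriction to $E^0 \cup E^1$ equals $\phi\circ\psi$ (that is, $\phi_0\circ\psi_0$ on vertices and $\phi_1\circ\psi_1$ on edges). By the uniqueness assertion in Theorem~\ref{graph-iso} — an extension of a given graph homomorphism to a zero-preserving semigroup homomorphism is uniquely determined — this composite must coincide with $\Phi(\phi\circ\psi)$. Hence $\Phi(\phi\circ\psi) = \Phi(\phi)\circ\Phi(\psi)$, so $\Phi$ respects the group operation.

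For injectivity of $\Phi$, if $\Phi(\phi)$ is the identity on $G(E)$, then in particular it fixes every vertex and every edge, so $\phi_0$ and $\phi_1$ are identity maps and $\phi$ is the identity automorphism of $E$. For surjectivity, I would invoke Proposition~\ref{isomorphisms}: any $\varphi \in \Aut(G(E))$ is in particular a semigroup isomorphism $G(E) \to G(E)$, so its restrictions $\phi_0 = \varphi|_{E^0}$ and $\phi_1 = \varphi|_{E^1}$ form a graph isomorphism $\phi = (\phi_0,\phi_1)$ from $E$ to $E$, i.e.\ $\phi \in \Aut(E)$. By the uniqueness in Theorem~\ref{graph-iso}, the extension of this $\phi$ is $\varphi$ itself, so $\Phi(\phi) = \varphi$. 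Therefore $\Phi$ is a bijective group homomorphism, giving $\Aut(G(E)) \cong \Aut(E)$.

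The routine bookkeeping (verifying that the extension of a bijective graph automorphism is indeed an automorphism, and that restriction produces a graph automorphism) is entirely handled by the two cited results, so there is no genuine obstacle here. The one point requiring mild care is the direction of composition: I must make sure the restriction and extension operations are mutually inverse in a way that is compatible with the group law on both sides, which is exactly what the uniqueness clause of Theorem~\ref{graph-iso} secures. Everything else is formal.
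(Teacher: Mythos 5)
Your proof is correct and relies on exactly the same two ingredients as the paper's proof, namely Theorem~\ref{graph-iso} and Proposition~\ref{isomorphisms}. The only difference is one of direction: you construct the isomorphism $\Aut(E) \to \Aut(G(E))$ by extension and verify the group law via the uniqueness clause of Theorem~\ref{graph-iso}, whereas the paper constructs the inverse map $\Aut(G(E)) \to \Aut(E)$ by restriction; this is an inessential, mirror-image reformulation of the same argument.
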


\begin{proof}
Let $\varphi \in \Aut(G(E))$ be any automorphism. Then, by Proposition~\ref{isomorphisms}, letting $\varphi_0$ and $\varphi_1$ be the restrictions of $\varphi$ to $E^0$ and $E^1$, respectively, gives a graph automorphism $(\varphi_0, \varphi_1)$ of $E$. Hence we can define a function $\psi : \Aut(G(E)) \to \Aut(E)$ by $\psi(\varphi) = (\varphi_0, \varphi_1)$. Moreover, by Theorem~\ref{graph-iso}, $\psi$ is a bijection. 

Now, if $\varphi, \varphi' \in \Aut(G(E))$ are two automorphisms, then, again by Proposition~\ref{isomorphisms}, the restrictions of $\varphi \circ \varphi'$ to $E^0$ and $E^1$ are precisely $\varphi_0 \circ \varphi'_0$ and $\varphi_1 \circ \varphi'_1$, respectively. It follows that $\psi : \Aut(G(E)) \to \Aut(E)$ is a group isomorphism.
\end{proof}

\begin{corollary} \label{group-cor}
For every group $H$ there is some graph $E$ such that $H \cong \Aut(G(E))$.
\end{corollary}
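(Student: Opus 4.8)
The plan is to reduce the statement to a purely graph-theoretic fact by using the automorphism correspondence already established. By Corollary~\ref{cor-autos} we have $\Aut(G(E)) \cong \Aut(E)$ for every graph $E$, so it suffices to produce, for each group $H$, a graph $E$ whose automorphism group (as a graph, in the sense of Section~\ref{graphs-sect}) is isomorphic to $H$. That every group arises as the automorphism group of some directed graph is a classical theorem (Frucht for finite groups, de~Groot and Sabidussi in general), so one permissible route is simply to invoke it; below I sketch a direct construction, which in the directed setting is short.

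First I would fix the generating set $X = H \setminus \{1\}$ (so $X$ contains no identity and hence produces no loops, and $X$ generates $H$ as a monoid, since $1 = a a^{-1}$ with $a \in X$ when $H$ is nontrivial, and $E$ is a single vertex when $H$ is trivial). Form the Cayley digraph $\Gamma$ with vertex set $H$ and one arc $h \to hx$ for each $h \in H$ and $x \in X$, ``coloured'' by $x$. Each left translation $\lambda_g : h \mapsto gh$ is a colour-preserving automorphism, giving an embedding $H \hookrightarrow \Aut(\Gamma)$. Conversely, a colour-preserving automorphism $\sigma$ satisfies $\sigma(hx) = \sigma(h)x$, so it is determined on all positive words in $X$ by $g := \sigma(1)$; since $X$ generates $H$ as a monoid, $\sigma = \lambda_g$. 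Hence the colour-preserving automorphisms of $\Gamma$ are exactly $\{\lambda_g : g \in H\} \cong H$.

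Next I would ``uncolour'' $\Gamma$, replacing the labels by rigid gadgets so as to obtain an ordinary graph $E$ whose full automorphism group agrees with the colour-preserving automorphism group of $\Gamma$. Well-order $X$, and for the $i$-th generator subdivide every arc $h \to hx_i$ by a single new vertex $c$ (producing $h \to c \to hx_i$), then attach to $c$ a pendant directed path of length $i$ terminating in a sink. The orientation of the subdivided arc records the direction $h \to hx_i$, and the length of the pendant path hanging off $c$ records the colour $i$; keeping $E$ simple guarantees that an automorphism is determined by its action on vertices. The resulting $E$ is a legitimate graph in the paper's sense (arbitrary, possibly infinite vertex and edge sets being allowed), and is finite precisely when $H$ is finite.

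The hard part will be the rigidity verification: showing that every automorphism of $E$ restricts to a colour- and orientation-preserving automorphism of $\Gamma$, so that $\Aut(E) \cong H$. For this I would argue that the three roles of vertices --- the original $H$-vertices, the subdivision vertices $c$, and the pendant-tail vertices --- are distinguished by automorphism-invariant local data: the tail vertices are exactly those lying on a maximal directed path ending at a sink whose interior vertices have in- and out-degree $1$; the subdivision vertices are those whose out-neighbourhood consists of one $H$-vertex and the start of a tail; and the $H$-vertices are those all of whose out-neighbours are subdivision vertices. Since this classification refers only to the shape of neighbourhoods, it is invariant under $\Aut(E)$ even when raw degrees collide, so any automorphism must preserve each class setwise, carry each subdivision vertex together with its pendant tail to one of the same tail-length (hence the same colour $i$), and therefore induce a colour- and orientation-preserving automorphism of $\Gamma$; conversely each $\lambda_g$ extends uniquely to $E$. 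The only care needed is to confirm that no accidental coincidence of the distinguishing data occurs in degenerate cases (for instance when $|X| = 1$, where the directed-cycle core could superficially resemble a tail), which the neighbourhood-based invariants above resolve.
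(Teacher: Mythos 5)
Your first paragraph is precisely the paper's proof: the paper disposes of this corollary in two lines by combining Corollary~\ref{cor-autos} with the theorem of Frucht~\cite{Frucht} that every group is the automorphism group of some graph (and you are right about attribution --- for infinite groups this realization theorem is due to de~Groot and Sabidussi, a point the paper glosses over by citing only Frucht). So the invocation route is correct, and it is exactly what the paper does; nothing more is required.

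The supplementary direct construction, however, contains a genuine gap for uncountable groups. You well-order $X = H \setminus \{1\}$ and attach to each subdivision vertex of colour $i$ a pendant directed path of length $i$ terminating in a sink. When $X$ is uncountable, every well-ordering of $X$ assigns infinite ordinals to most generators, and there is no such gadget for an infinite ordinal $i$: paths in a graph are finite sequences of edges, and a one-way infinite chain has no terminal sink, so ``a directed path of length $i$ ending in a sink'' simply does not exist for $i \geq \omega$. Repairing this requires a family of pairwise non-isomorphic rigid gadgets indexed by an arbitrary cardinal (e.g., the asymmetric trees used in the de~Groot--Sabidussi argument), at which point you are essentially reproving the classical theorem rather than sketching it. For finite and countably infinite $H$ the outline is sound, though the rigidity classification should be stated non-circularly: as written, the ``subdivision vertices'' and ``$H$-vertices'' are each characterized in terms of the other, and a colour-$1$ subdivision vertex itself lies on a length-one path to a sink, so the tail-vertex criterion depends delicately on how the maximality clause is parsed. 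Since the statement you ultimately need is exactly the one you invoke in your first paragraph, the cleanest course is to rely on that citation, as the paper does.
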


\begin{proof}
By Frucht's theorem~\cite{Frucht}, every group is isomorphic to the automorphism group of some graph. The claim now follows by combining this fact with Corollary~\ref{cor-autos}.
\end{proof}

\begin{corollary} \label{poset-cor}
Let $E$ be a simple acyclic graph, let $J_{G(E)}$ be the set of nonzero $\GJ$-classes of $G(E)$, and let $\, \Aut(J_{G(E)}, \leq_{\GJ})$ denote the group of order-automorphisms of $\, (J_{G(E)}, \leq_{\GJ})$. Then $\, \Aut(J_{G(E)}, \leq_{\GJ}) \cong \Aut(G(E))$.
\end{corollary}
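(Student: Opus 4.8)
The plan is to reduce the statement to a comparison of automorphism groups of $E$ and of the poset, using the two structural results already available. By Corollary~\ref{cor-autos} we have $\Aut(G(E)) \cong \Aut(E)$, so it suffices to produce an isomorphism $\Aut(E) \cong \Aut(J_{G(E)}, \le_{\GJ})$. Since $E$ is acyclic, each strongly connected component is a single vertex, so Corollary~\ref{Green's}(3) together with Proposition~\ref{partial-orders} identifies $(J_{G(E)}, \le_{\GJ})$ with the poset $(E^0, \preceq)$, where $u \preceq v$ exactly when there is a path in $E$ from $v$ to $u$ (by Lemma~\ref{orders}(3)). Thus the whole problem becomes showing $\Aut(E) \cong \Aut(E^0, \preceq)$.

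Next I would set up the natural restriction homomorphism $\Theta : \Aut(E) \to \Aut(E^0,\preceq)$ sending a graph automorphism $(\phi_0,\phi_1)$ to $\phi_0$. This is well defined because a graph automorphism carries edges to edges, hence paths to paths, and therefore preserves reachability; it is plainly a group homomorphism. Injectivity is immediate from simpleness: since $E$ has no loops and at most one edge between any ordered pair of distinct vertices, each edge is determined by its source--range pair, so $\phi_1$ is determined by $\phi_0$, and $\phi_0 = \mathrm{id}$ forces $(\phi_0,\phi_1)=\mathrm{id}$.

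The main obstacle is surjectivity of $\Theta$: given $\sigma \in \Aut(E^0,\preceq)$, I must exhibit a graph automorphism whose vertex part is $\sigma$, and for a simple graph the only candidate is $(\sigma,\psi)$ where $\psi$ sends an edge $e$ to the edge with source $\sigma(\so(e))$ and range $\sigma(\ra(e))$. The crux is that $\psi$ be well defined, i.e.\ that $\sigma$ carry the edge relation of $E$ into itself. One half of this is automatic: if $v \preceq u$ is a covering pair, then the path in $E$ from $u$ to $v$ must have length $1$ (an intermediate vertex would violate the covering), so every covering pair is realized by an edge, and an order-automorphism preserves covering pairs. The delicate point---where simpleness and acyclicity have to be used in full---is to control the non-covering (``transitive'') edges of $E$ and show that $\sigma$ preserves these as well; this is the step I expect to require the most care, since a priori the reachability order does not record which transitive comparabilities are witnessed by a single edge. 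Once $\psi$ is shown to be well defined, $(\sigma,\psi)$ is a graph homomorphism by construction, its two-sided inverse is built from $\sigma^{-1}$, and $\Theta(\sigma,\psi)=\sigma$, giving the isomorphism $\Aut(E)\cong\Aut(E^0,\preceq)$ and hence $\Aut(J_{G(E)},\le_{\GJ})\cong\Aut(G(E))$.
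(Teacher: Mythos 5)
Your reduction follows exactly the paper's route: identify $(J_{G(E)},\leq_{\GJ})$ with the reachability poset $(E^0,\preceq)$ using acyclicity and Lemma~\ref{orders}(3), then invoke Corollary~\ref{cor-autos} to replace $\Aut(G(E))$ by $\Aut(E)$. Your restriction map $\Theta$, the injectivity argument from simpleness, and the observation that every covering pair of $(E^0,\preceq)$ is realized by an edge are all correct. But the step you deferred---surjectivity of $\Theta$, i.e., that an order-automorphism $\sigma$ of $(E^0,\preceq)$ must carry the source--range pair of every edge to the source--range pair of an edge---is not merely delicate: it is false, so no argument can complete the plan as stated. Take $E$ to be the simple acyclic graph with vertices $a_1,b_1,c_1,a_2,b_2,c_2$ and edges $a_1\to b_1$, $b_1\to c_1$, $a_1\to c_1$, $a_2\to b_2$, $b_2\to c_2$. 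Its reachability poset is a disjoint union of two three-element chains, so $\Aut(E^0,\preceq)$ has order $2$ (swap the chains), whereas $\Aut(E)$ is trivial: a graph automorphism preserves out-degrees and components, $a_1$ is the unique vertex of out-degree $2$, so the first component is fixed pointwise, and then the second is as well. The ``transitive'' edge $a_1\to c_1$ is invisible to $\preceq$, which is precisely the obstruction you anticipated.

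You should also know that this is exactly where the paper's own proof is too quick: after the same identification it asserts that every automorphism of $E$ induces an order-automorphism of $J_{G(E)}$ ``and vice versa'', and the ``vice versa'' is the unjustified lifting claim. In fact the example above shows the corollary itself fails as stated: there $\Aut(G(E))\cong\Aut(E)$ is trivial by Corollary~\ref{cor-autos}, yet $\Aut(J_{G(E)},\leq_{\GJ})$ has order $2$. What your argument genuinely proves (well-definedness plus injectivity of $\Theta$, combined with Corollary~\ref{cor-autos}) is an embedding of $\Aut(G(E))$ into $\Aut(J_{G(E)},\leq_{\GJ})$. To get an isomorphism one needs a hypothesis making the edge relation recoverable from $\preceq$: for instance, if for every $e\in E^1$ the only path from $\so(e)$ to $\ra(e)$ is $e$ itself, then the edges are exactly the covering pairs, and your covering-pair observation completes the surjectivity argument; the transitively closed case works similarly.
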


\begin{proof}
Since $E$ is acyclic, as noted immediately after Corollary~\ref{Green's}, the elements of $J_{G(E)}$ are in one-to-one correspondence with the vertices of $E$. Moreover, for all $u,v \in E^0$, by Lemma~\ref{orders}(3), $J_{u}\leq_{\GJ} J_{v}$ if and only if $\so(t)=v$ and $\ra(t)=u$ for some $t \in \pth(E)$. It is now easy to see that every automorphism of $E$ induces an order-automorphism of $J_{G(E)}$, and vice versa. It follows that $\Aut(E) \cong \Aut(J_{G(E)}, \leq_{\GJ})$, from which we obtain the result, by Corollary~\ref{cor-autos}.
\end{proof}




\vspace{.1in}

\noindent Z.\ Mesyan, Department of Mathematics, University of Colorado, Colorado Springs, CO 80918, USA 

\noindent \emph{Email:} \href{mailto:zmesyan@uccs.edu}{zmesyan@uccs.edu}  \newline

\noindent J.\ D.\ Mitchell, Mathematical Institute, North Haugh, St Andrews, Fife, KY16 9SS, Scotland

\noindent \emph{Email:} \href{mailto:jdm3@st-and.ac.uk}{jdm3@st-and.ac.uk} \newline


\begin{thebibliography}{00}

\bibitem{AP} G.\ Abrams and G.\ Aranda Pino, \textit{The Leavitt path algebra of a graph,} J.\ Algebra \textbf{293} (2005) 319--334.

\bibitem{AMP} P.\ Ara, M.\ A.\ Moreno, and E.\ Pardo, \textit{Nonstable $K$-theory for graph algebras,} Algebr.\ Represent.\ Theory \textbf{10} (2007) 157--178.

\bibitem{AH} C.\ J.\ Ash and T.\ E.\ Hall, \textit{Inverse semigroups on graphs,} Semigroup Forum \textbf{11} (1975) 140--145.

\bibitem{CS} A.\ Costa and B.\ Steinberg, \textit{A categorical invariant of flow equivalence of shifts,} Ergodic Theory and Dynamical Systems, available on CJO2014. (doi:10.1017/etds.2014.74)

\bibitem{Easdown}  D.\ Easdown, \textit{The minimal faithful degree of a fundamental inverse semigroup,} Bull.\ Aust.\ Math.\ Soc.\ \textbf{35} (1987) 373--378.

\bibitem{Frucht} R.\ Frucht, \textit{Herstellung von Graphen mit vorgegebener abstrakter Gruppe,} Compositio Math.\ \textbf{6} (1939) 239--250.

\bibitem{Howie} J.\ M.\ Howie, \textit{Fundamentals of semigroup theory,} Oxford University Press, Oxford-New York, 1995.

\bibitem{Jones} D.\ G.\ Jones, \textit{Polycyclic monoids and their generalisations,} Ph.\ D.\ Thesis, Heriot-Watt University, 2011.

\bibitem{JL} D.\ G.\ Jones and M.\ V.\ Lawson, \textit{Graph inverse semigroups: Their characterization and completion,} J.\ Algebra \textbf{409} (2014) 444--473.

\bibitem{Krieger} W.\ Krieger, \textit{On subshifts and semigroups,} Bull.\ London Math.\ Soc.\ \textbf{38} (2006) 617--624.

\bibitem{KPRR} A.\ Kumjian, D.\ Pask, I.\ Raeburn, and J.\ Renault, \textit{Graphs, groupoids, and Cuntz-Krieger algebras,} J.\ Funct.\ Anal.\ \textbf{144} (1997) 505--541.

\bibitem{KPR} A.\ Kumjian, D.\ Pask, and I.\ Raeburn, \textit{Cuntz-Krieger algebras of directed graphs,} Pacific J.\ Math.\ \textbf{184} (1998)
161--174.

\bibitem{Lawson} M.\ V.\ Lawson, \textit{Inverse semigroups: The theory of partial symmetries,} World Scientific, Singapore-NewJersey-London-Hong Kong, 1998.

\bibitem{MMMP} Z.\ Mesyan, J.\ D.\ Mitchell, M.\ Morayne, and Y.\ P\'eresse, \textit{Topological graph inverse semigroups,} preprint. (arXiv:1306.5388)

\bibitem{GAP} J.\ D.\ Mitchell et al., \textit{Semigroups - GAP package, Version 2.4}, April 2015. (\url{http://www-groups.mcs.st-andrews.ac.uk/~jamesm/semigroups.php})

\bibitem{NP} M.\ Nivat and J.-F.\ Perrot, \textit{Une g\'en\'eralisation du mono\"{\i}de bicyclique,} C.\ R.\ Acad.\ Sci.\ Paris \textbf{271} (1970) 824--827. 

\bibitem{Paterson} A.\ L.\ T.\ Paterson, \textit{Graph inverse semigroups, groupoids and their C*-algebras,} J.\ Operator Theory \textbf{48} (2002) 645--662.

\end{thebibliography}
\end{document}